\newcommand{\f}{\operatorname}
\theoremstyle{plain}
\newtheorem{theorem}{Theorem}[section]
\newtheorem{proposition}[theorem]{Proposition}
\theoremstyle{definition}
\newtheorem{definition}[theorem]{Definition}
\theoremstyle{remark}
\newlength{\myl}
\let\origequation=\equation
\let\origendequation=\endequation
\resizebox{\linewidth}{!}{\ensuremath{\BODY}}}  
\title{Objective Bayesian analysis for the differential entropy of the Gamma distribution}
\author{
Eduardo Ramos$^1$, Osafu A. Egbon$^1$, Pedro L. Ramos$^2$, \\ Francisco A. Rodrigues$^1$  and Francisco Louzada$^1$ \\
\normalsize{$^{1}$Institute of Mathematical Science and Computing, University of S\~ao Paulo, S\~ao Carlos, Brazil} \\
\normalsize{$^{2}$Facultad de Matemáticas, Pontificia Universidad Católica de Chile, Macul, Santiago 7820436, Chile}
}
\begin{document}

\maketitle

\begin{abstract}
 The present paper introduces a fully objective Bayesian analysis to obtain the posterior distribution of an entropy measure. Notably, we consider the gamma distribution, which describes many natural phenomena in physics, engineering, and biology. We reparametrize the model in terms of entropy, and different objective priors are derived, such as Jeffreys prior, reference prior, and matching priors. Since the obtained priors are improper, we prove that the obtained posterior distributions are proper and that their respective posterior means are finite. An intensive simulation study is conducted to select the prior that returns better results regarding bias, mean square error, and coverage probabilities. The proposed approach is illustrated in two datasets: the first relates to the Achaemenid dynasty reign period, and the second describes the time to failure of an electronic component in a sugarcane harvest machine.
\end{abstract}

\section{Introduction} \label{sec:introduction}

In recent years, there has been a growing interest in estimating different metrics of information theory related to parametric distributions. The Shannon entropy, also known as differential entropy, introduced by Claude  Shannon \citep{shannon1948mathematical}, is an essential quantity that measures the amount of available information or uncertainty outcome of a random process. Given a density function $f(x|\alpha,\beta)$, the differential entropy is given by 
\begin{equation}
    H(\alpha,\beta)=\mathbb{E}\left(-\log f(x|\alpha,\beta)\right).
\end{equation}
{\color{black}Shannon entropy is a versatile tool that can be used to analyze various systems. In applied statistics, for example, it can be utilized to measure the level of disorder within a mechanical or biological system. In this context, a higher entropy value associated with a random phenomenon may indicate the presence of irregularities in the system and dispersion from homogeneity. It can aid in the examination of species diversity and heterogeneity within an ecosystem \citep{jost2006entropy}.  
As will be shown later in this work, the Shannon entropy can be used to study the instability of political institutions, specifically in the Achaemenid dynasty. }

The Shannon entropy {\color{black}of a probability distribution is a function of the }distribution parameters that must be estimated from a sample. A commonly used method to estimate the parameters is the maximum likelihood approach due to its one-to-one invariance property. Hence, {\color{black}to estimate the entropy,} it is only required to estimate the parameters of the original model and plug them into the entropy function. Under this approach many authors have derived the entropy estimators for different distributions such as Weibull \citep{cho2015estimating}, Inverse Weibull \citep{yu2019statistical}, Log-logistic \citep{du2018statistical} and for the exponential distribution with different shift origin \citep{kayal2013estimation}, to list a few.

A major drawback of the maximum likelihood inference is that the obtained estimates are usually biased for small samples \citep{cordeiro2014introduction}. Another concern under small samples happens when constructing the confidence intervals for the parameters since such intervals are not precise and may not return good coverage probabilities.  {\color{black} To overcome these limitations, objective Bayesian methods could be adopted as they usually return more precise estimates in terms of less bias \citep{firth1993bias} and coverage probabilities \citep{tibshirani1989}. Several objective prior distributions have been developed for the Gamma distribution. For example, the work of }
\cite{miller1980bayesian}, \cite{sun1996frequentist}, \cite{berger2015}, and  \cite{louzada2018efficient}. More recently, \cite{Pedro2020onposterior} revised the most common objective priors and provided sufficient and necessary conditions for the obtained posteriors and their higher moments to be proper. {\color{black}However, there is a significant lack of adequate attention in the objective Bayesian analysis of gamma entropy, which is a resourceful tool for studying diverse systems.}

Although {\color{black}previous works have} obtained different joint posterior distributions for the parameters of interest, the obtained posterior {\color{black}quantities} can not be directly plunged in the Shannon entropy {\color{black}function, unlike the MLE case. Hence it becomes imperative to develop a full Bayesian analysis of the Shannon entropy itself}. {\color{black} That is, } under the Bayesian approach, it is necessary to obtain the posterior distribution of the entropy measure. 
As a result, \cite{shakhatreh2020objective} derived different posterior distributions using objective priors for the entropy assuming a Weibull distribution. {\color{black}However, studies on the posterior distributions of the entropy of gamma distribution using objective priors have not been considered. Unlike the Weibull distribution, the complex nature of the expression of gamma entropy makes the objective Bayesian analysis challenging.}
The gamma distribution considered here is a two-parameter family of distributions, which is among the most well-known distributions used to model different stochastic processes and to make statistical inferences. It has received attention from different fields. It has surfaced in many areas of applications, including financial analysis~\citep{cizek2005statistical}, climate analysis~\citep{husak2007use}, reliability analysis~\citep{gupta1961gamma}, machine learning~\citep{kamalov2020gamma}, and physics~\citep{garcia2012stochastic}.
Particularly, the gamma distribution includes the exponential distribution, Erlang distribution, and chi-square distribution as special cases. 

A random variable $X$ follows a gamma distribution, if its probability density function, parametrized by a shape parameter $\alpha>0$ and scale parameter $\beta>0 $, is given by,
\begin{equation}\label{pdfgammma}
f(x\,| \alpha,\beta)=\frac{\beta^\alpha}{\Gamma(\alpha)}x^{\alpha-1}e^{-\beta x} ,~~ x>0,
\end{equation}
where $\Gamma(\phi)=\int_{0}^{\infty}{e^{-x}x^{\phi-1}dx}$ is the gamma function. 

\textcolor{black}{In this paper, focusing on the gamma distribution, we derive the posterior distributions using objective priors, such as Jeffreys prior \cite{jeffreys1946invariant}, reference priors \citep{bernardo1979a, berger1992development,berger2015}, and matching priors \citep{tibshirani1989}, and prove that the obtained posteriors are proper and can be used to construct the posterior distributions of the Shannon entropy. Although the posterior distribution may be proper, the posterior mean can be infinite, which is undesirable. Therefore, we further proved that the obtained posterior means for the gamma entropy measure are finite. Credibility intervals are obtained to construct accurate interval estimates. Since the posterior distributions of $H(\cdot)$ do not have a closed form, we considered a simple Metropolis-Hastings algorithm to sample from the posterior, which is efficient and fast for our proposed model. A function in R software is presented to estimate such results automatically by setting good initial values and returning the posterior estimates. Finally, we applied our proposed results to estimate the entropy related to the failure times in sugarcane harvest machines and in the Achaemenid dynasty's rule time to quantify the variability in the Persian Empire's political institutions.}

The paper is organized as follows. Section \ref{entropysec} presents the maximum likelihood estimators for the gamma distribution parameters and the Shannon Entropy computation. Section \ref{bayesianinf} presents the objective Bayesian analysis using objective priors for the Shannon entropy parameter's reparametrized posterior distribution. Section \ref{simulations} provides a simulation study to select the best objective prior. In Section \ref{application}, the methodology is illustrated on a real dataset. Some final comments are given in Section \ref{conclusions}.

\section{Frequentist approach}\label{entropysec}

The classical inference (frequentist) is a commonly used approach to conduct parameter estimation of a particular distribution. In this case, the parameter is treated as fixed, and the MLE is commonly used to obtain the estimates. The MLE has good asymptotic properties, such as invariance, consistency, and efficiency. This procedure search the parameter space of $\boldsymbol{\theta}$ where the maximum likelihood $\hat{\boldsymbol{\theta}}=\sup_{\boldsymbol{\theta}}L(\boldsymbol{\theta}|x)$ is obtained. Here our main aim is to obtain the estimate of a function of the parameters. Hence, firstly we need to obtain the entropy measure, mathematically defined as $H(\boldsymbol{\theta})=E(-log f(x|\boldsymbol{\theta}))$, which quantifies the amount of uncertainty in the data $x$. 
Besides, it should be noted that a higher realization of $H$ indicates more uncertainty.

The entropy $H$ of the gamma density is given by
\begin{equation}\label{entropy}
\begin{aligned}
H(\alpha,
\beta)&=-\int^\infty_0\log\left(\frac{\beta^\alpha}{\Gamma(\alpha)}x^{\alpha-1}exp\{-\beta
x\}\right)f(x|\alpha,\beta)dx \\ & 
=\alpha-\log(\beta)+\log(\Gamma(\alpha))+(1-\alpha)\psi(\alpha),
\end{aligned}
\end{equation}
where \textcolor{black}{$\psi(k)=\frac{\partial}{\partial k}\log\Gamma(k)$} is the digamma function.

\textcolor{black}{There are several methods to estimate $H(\cdot)$ from a classical standpoint. The first method involves utilizing plug-in estimators, where we first obtain the Maximum Likelihood Estimators (MLEs) $\hat\alpha$ and $\hat\beta$ for the parameters $\alpha$ and $\beta$, respectively. Subsequently, we plug $\hat\alpha$ and $\hat\beta$ into $H(\hat\alpha,\hat\beta)$ to obtain the MLE of $H(\cdot)$, denoted as $\hat{H}(\cdot)$. To construct confidence intervals, we can employ the delta method to approximate the variance of $\hat{H}(\cdot)$. However, as per Oehlert (1992), the delta method is primarily a technique for approximating expected values of functions of random variables when direct evaluation of the expectation is not feasible. The delta method approximates the expectation of $H(\cdot)$ by taking the expectation of a polynomial approximation to $H(\cdot)$. Alternatively, the second method involves considering a change of variable, which could yield more precise results as it does not rely on approximations.}

 Now, consider a change of variable by setting $W=\alpha$, which implies $H = W - \log(\beta) + \log\Gamma(W) + (1-W)\psi(W)$. The aim of the transformation is to obtain a likelihood of $H$ and $W$ instead of $\alpha$ and $\beta$. Therefore, if $X_1$, $\ldots$, $X_n\,$ are a complete sample from (\ref{pdfgammma}) then the likelihood function of $H$ and $W$ is given as
\begin{equation}\label{eq5}
L(W, H\,|
\boldsymbol{x})=\frac{\delta(W,H)^{nW}}{\Gamma(W)^n}\left\{\prod_{i=1}^n{x_i^W}\right\}\exp\left\{-\delta(W,H)\sum_{i=1}^n x_i\right\},
\end{equation}
where $\delta(W,H)=\exp\left(W + \log\Gamma(W) + (1 - W)\psi(W) - H\right)$. 

The log-likelihood function is given by
\begin{equation}\label{eq6}
\begin{aligned}
l(W, H\,|\boldsymbol{x})=W\log(\delta(W,H))-\textcolor{black}{n}\log\Gamma(W) + W\sum_{i=1}^n \log(x_i) - \delta(W,H)\sum_{i=1}^{n}x_i.\\
\end{aligned}
\end{equation}

The MLEs for the parameters are obtained by directly maximizing the log-likelihood function \textcolor{black}{$l(W, H\,|\boldsymbol{x})$}. Hence,
after some algebraic manipulations the MLEs $\hat{W}$ and $\hat{H}$ are obtained from the solution of
\begin{equation*}
\frac{\partial l(W,H\,|\boldsymbol{x})}{\partial W}=\log(\delta(W,H))-\psi(W) - \sum_{i=1}^n \log(x_i) + \sigma \left(W - \delta(W,H)\sum_{i=1}^n x_i\right)
\end{equation*}
\begin{equation*}
\frac{\partial l(W,H\,|\boldsymbol{x})}{\partial H}=-W + \delta(W,H)\sum_{i=1}^{n}x_i
\end{equation*}
 where $\sigma = 1 + (1- W)\psi'(W)$.
The solutions for these equations provide the maximum likelihood estimators for the entropy of the gamma distributions,
$\widehat{H}$ and $\widehat{W}$. Since equation (3) cannot be solved easily using a closed-form solution,
numerical techniques must estimate the true parameters.

Following \cite{migon}, the MLEs are asymptotically normally distributed with a joint bivariate normal distribution given by
\begin{equation*} 
\left(\hat{W}_{\rm MLE},\hat{H}_{\rm MLE}\right) \sim N_2 \left[\left(W,H \right),I^{-1} \left(W,H\right) \right] \quad \mbox{ as } \quad n \to \infty , 
\end{equation*}
where $I(W,H)$ is the Fisher information matrix for the reparametrized model given by
 \begin{align}
 \begin{aligned}
   I(W,H)
=&\begin{bmatrix}
\psi'(W) -2\sigma + W\sigma^2 & 1-\sigma W\\
1-\sigma W & W 
\end{bmatrix},
\end{aligned}
 \end{align}

\vspace{0.2cm}
\noindent and $\psi\,'$($W$) is the derivative of $\psi$($W$), called the
trigamma function.

In the present paper, we are only interested in $H$, and thus, given $0<a<1$ and using the element $ (I(W,H)^{-1})_{22}$, we can conclude that the confidence interval for the estimate of the entropy measure with a  confidence level of $100(1-a)\%$ for $H$ is given by
\begin{equation}
    \hat{H}-{Z_{\frac{a}{2}}}\sqrt{(1 - W)^2\psi'(W) + 2 - W}< H<\hat{H}+{Z_{\frac{a}{2}}}\sqrt{(1 - W)^2\psi'(W) + 2 - W},
\end{equation}
where $a$ is the significance level and ${Z_{\frac{a}{2}}}$ is the $\frac{a}{2}$-th percentile of the standard normal distribution.

\section{Bayesian Inference}\label{bayesianinf}

Here, the parameter $\boldsymbol{\theta}$ is considered as a random variable, and the distribution that represents knowledge about $ \boldsymbol {\theta}$ is refereed as a prior distribution and defined by $\pi(\boldsymbol{\theta})$. The distribution $\pi(\boldsymbol{\theta})$ provides the knowledge or uncertainty about $\boldsymbol{\theta}$ before obtaining the sample data $\boldsymbol{x}$. After the data $x$ is observed, a natural way of combining the resulting information from the a priori distribution and the likelihood function is done by Bayes' theorem, resulting in the posterior distribution of $\boldsymbol{\theta}$ given $\boldsymbol{x}$. 

\textcolor{black}{Assuming a Bayesian approach our specific interest lies in estimating the entropy related to the model, which is denoted as $g(\boldsymbol{\theta})=H(\boldsymbol{\theta})$. As $\boldsymbol{\theta}$ is a random variable, $g(\boldsymbol{\theta})$ becomes a random variable as well. In practical situations, determining the transformation can be challenging, often leading practitioners to rely on plug-in estimators. Despite this, correcting and deriving the distribution of $g(\boldsymbol\theta)$ is sometimes feasible, rendering it a random variable. In such cases, we obtain the posterior distribution of the entropy $H(\cdot)$. In this work, we demonstrate the ability to derive this distribution and directly compute results from these posteriors.}

To obtain the posterior distributions for the $H$ parameter, we can consider the one-to-one invariance property of the Jeffreys prior, reference prior, and matching prior, and thus we only need to obtain the Jacobian matrix related to the reparametrization from $\alpha$ and $\beta$ to $H$ and $W$. After some algebraic manipulations, we can conclude that the parameters $\beta$ and $\alpha$ can be written as
\begin{equation*}
\beta = \exp\left(W + \log(\Gamma(W)) + (1 - W)\psi(W) - H\right) \quad \mbox{ and } \quad \alpha=W,
\end{equation*}
and thus, from the relations 
     \begin{align*}
      \frac{\partial \alpha}{\partial H}=0,\, 
      \frac{\partial \alpha}{\partial W}=1,\,
     \frac{\partial \beta}{\partial H}=- \beta \ \ \mbox{ and } \ \ 
     \frac{\partial \beta}{\partial W}= \left( 1+(1-W)\textcolor{black}{\psi^{'}(W)} \right)\beta,
 \end{align*}  
it follows that the Jacobian matriz (J) relative to the change of variable will be given by
 \begin{align}
   J = \begin{bmatrix}
\frac{\partial \alpha}{\partial H} & \frac{\partial \alpha}{\partial W} \\
\frac{\partial \beta}{\partial H} & \frac{\partial \beta}{\partial W}
\end{bmatrix}
=\begin{bmatrix}
0 & 1\\
-\beta& \sigma \beta
\end{bmatrix},
 \end{align}
where $\sigma = 1 + (1-W)\psi'(W)$.

\vspace{0.1 cm}

The use of objective priors plays an essential role in Bayesian analysis where the data provide the dominant information, and the posterior distribution is not overshadowed by prior information. Such priors allow us to conduct objective Bayesian inference. On the other hand, in most situations, they are not proper prior distributions and may lead to improper posterior, invalidating the analysis since we cannot compute the normalizing constant. Therefore, we need to check if the obtained posterior (and posterior mean) is proper (or finite). The priors for the entropy and its related posterior distributions will be discussed in the next subsections.

Before we derive the priors and posterior distributions, hereafter, we shall always assume that there are at least two distinct data $t_i$, that is, there exists $1\leq i<j \leq n$ such that $t_i\neq t_j$. Additionally, before we proceed, we present below a definition and proposition that will be used to prove that the obtained posteriors are proper. In the following let $\overline{\mathbb{R}} = \mathbb{R}\cup \{-\infty, \infty\}$ denote the \textit{extended real number line} and let $\mathbb{R}^+$ denote the strictly positive real numbers. The following definition is a special case from the one presented in \cite{ramos2023power} and will play an important role in proving that the analyzed posterior distributions and posterior means are proper.

\begin{definition}\label{def31}
Let $a\in \mathbb{\overline{R}}$, $\f{g}:\mathcal{U}\to\mathbb{R^+}$ and $\f{h}:\mathcal{U}\to\mathbb{R^+}$, where $\mathcal{U}\subset\mathbb{R}$ and suppose that $\lim_{x\to a} \dfrac{\f{g}(x)}{\f{h}(x)} = c\in \mathbb{R}$. Then, if $c>0$, we say that $g(x)\underset{x\to a}{\propto} h(x)$.
\end{definition}

Regarding the above definition, we have the following proposition from \cite{ramos2017bayesian}.

\begin{proposition}\label{prop32} Let $\f{g}:(a,b)\to\mathbb{R^+}$ and $\f{h}:(a,b)\to\mathbb{R^+}$ be continuous functions in $(a,b)\subset\mathbb{R}$, where $a\in\overline{\mathbb{R}}$ and $b\in\overline{\mathbb{R}}$, and let $c\in(a,b)$. Then $\f{g}(x)\underset{x\to a}{\propto} \f{h}(x)$ implies in $
\int_a^c g(t)\; dt \propto \int_a^c h(t)\; dt$ and  $\f{g}(x)\underset{x\to b}{\propto} \f{h}(x)$ implies in $\int_c^b g(t)\; dt \propto \int_c^b h(t)\; dt$.
\end{proposition}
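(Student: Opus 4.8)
The plan is to localize each integral to a neighbourhood of the relevant endpoint and there replace the limit-comparison hypothesis by an honest two-sided constant bound. Throughout I read the conclusion ``$\int_a^c g \propto \int_a^c h$'' as the operative statement that the two integrals (each well defined in $(0,\infty]$, since $g,h>0$) are simultaneously finite or simultaneously infinite; this is precisely the form in which the result is later invoked to transfer propriety from a known integrand to a posterior. I write $L$ for the positive limit $\lim_{x\to a} g(x)/h(x)$ to avoid clashing with the split point $c\in(a,b)$.

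I would prove the endpoint-$a$ statement in detail; the endpoint-$b$ statement is the mirror image, obtained by replacing $x\to a$ with $x\to b$ and the interval $(a,d)$ with $(d,b)$. Applying the definition of the limit with tolerance $\varepsilon=L/2$ yields a point $d\in(a,c)$ such that
\[
\tfrac{L}{2}\,h(x)\;\le\;g(x)\;\le\;\tfrac{3L}{2}\,h(x)\qquad\text{for all }x\in(a,d),
\]
where positivity of $h$ is used to clear the denominator. Crucially, this inequality is extracted purely from the limiting behaviour as $x\to a$, so it holds without change whether $a$ is finite or $a=-\infty$.

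Next I would write $\int_a^c = \int_a^d + \int_d^c$. Since $g$ and $h$ are continuous on the compact interval $[d,c]$, the pieces $\int_d^c g$ and $\int_d^c h$ are finite, so the finiteness of $\int_a^c g$ (resp.\ $\int_a^c h$) is governed entirely by $\int_a^d g$ (resp.\ $\int_a^d h$). Integrating the displayed bound over $(a,d)$ gives
\[
\tfrac{L}{2}\int_a^d h\;\le\;\int_a^d g\;\le\;\tfrac{3L}{2}\int_a^d h,
\]
from which $\int_a^d g<\infty$ if and only if $\int_a^d h<\infty$. Adding back the finite tail pieces shows that $\int_a^c g$ and $\int_a^c h$ are simultaneously finite or infinite, which is the claim.

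The only point requiring care is the treatment of an infinite endpoint: every step must be routed through the $\varepsilon$-neighbourhood supplied by the limit rather than through an explicit interval $(a,a+\delta)$, so that one and the same wording covers $a,b\in\mathbb{R}$ and $a,b=\pm\infty$ uniformly. Otherwise this is just the limit-comparison test, and the hypothesis $L>0$ is what makes it two-sided: positivity of the limit is exactly what delivers the lower bound $\tfrac{L}{2}h\le g$ and hence the converse implication, whereas $L=0$ would yield only one direction.
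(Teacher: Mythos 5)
Your proof is correct: extracting the two-sided bound $\tfrac{L}{2}\,h(x)\le g(x)\le \tfrac{3L}{2}\,h(x)$ on a one-sided neighbourhood $(a,d)$ of the endpoint, splitting $\int_a^c=\int_a^d+\int_d^c$ with the compact piece finite by continuity, and integrating the inequalities is the standard limit-comparison argument, and your reading of the conclusion as ``simultaneously finite or simultaneously infinite'' is exactly the sense in which Proposition \ref{prop32} is invoked later (your handling of infinite endpoints and the $L>0$ hypothesis is also correct). Note that the paper itself states this proposition without proof, importing it from \cite{ramos2017bayesian}, so there is no in-paper argument to compare against; yours is the natural proof of the quoted result.
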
 

\subsection{Jeffreys prior}
\cite{jeffreys1946invariant} described a procedure to achieve an objective prior, which is invariant under one-to-one monotone transformations. The invariant property of the Jeffreys prior has been widely exploited to make statistical inferences from its posterior distribution numerical analysis. The prior construction is based on the square root of the determinant of the Fisher information matrix  $I(\alpha,\beta)$. Thus, the Jeffreys prior to the gamma distribution is given by
\begin{equation*}
\pi_1\left(\alpha,\beta\right)\propto \frac{\sqrt{\alpha\psi'(\alpha)-1}}{\beta}.
\end{equation*}

Additionally, from the determinant of the Fisher information, or using the change of variables over the Jeffreys prior we have
\begin{equation}\label{priorjnk}
\pi_1\left(H,W\right)\propto \sqrt{W\psi'(W)-1}.
\end{equation}
Finally, the joint posterior distribution for $H$ and $W$ produced by the Jeffreys prior is 
\begin{equation}\label{postjnk1} 
\begin{aligned}
\pi_1(H,W|\boldsymbol{x})\propto\frac{\delta(W,H)^{nW}\sqrt{W\psi'(W)-1}}{\Gamma(W)^n}\left\{\prod_{i=1}^n{x_i^W}\right\}\exp\left\{-\delta(W,H)\sum_{i=1}^n x_i\right\} .
\end{aligned}
\end{equation}

\begin{theorem}\label{theo33} The posterior density (\ref{postjnk1}) is proper for all $n\geq 2$.
\end{theorem}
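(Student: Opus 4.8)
The plan is to prove that the integral of the density \eqref{postjnk1} over the parameter space $(H,W)\in\mathbb{R}\times(0,\infty)$ is finite. The strategy is first to integrate out $H$ for each fixed $W$, which turns the problem into a single integral in $W$, and then to control that integral separately near $W\to 0^+$ and near $W\to\infty$ by means of the comparison criterion of Proposition \ref{prop32}. Throughout, write $S=\sum_{i=1}^n x_i$ and $P=\prod_{i=1}^n x_i$, and note that $\sqrt{W\psi'(W)-1}$ is strictly positive and continuous on $(0,\infty)$ (this is exactly what makes the Jeffreys prior well defined), so that all integrands below are positive and continuous and Proposition \ref{prop32} applies.

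For fixed $W$, the map $H\mapsto \delta(W,H)$ is a strictly decreasing bijection from $\mathbb{R}$ onto $(0,\infty)$ with $\partial\delta/\partial H=-\delta$, so substituting $\beta=\delta(W,H)$, hence $dH=-\beta^{-1}d\beta$, reduces the inner integral to a gamma integral:
\[
\int_{\mathbb{R}}\delta(W,H)^{nW}\exp\!\left(-\delta(W,H)\,S\right)dH
=\int_0^\infty \beta^{nW-1}e^{-\beta S}\,d\beta=\frac{\Gamma(nW)}{S^{nW}}.
\]
Carrying the remaining $W$-dependent factors along, the marginal becomes
\[
g(W):=\int_{\mathbb{R}}\pi_1(H,W\mid\boldsymbol{x})\,dH
\;\propto\;\frac{\sqrt{W\psi'(W)-1}\;\Gamma(nW)}{\Gamma(W)^n}\left(\frac{P}{S^n}\right)^{W},
\]
and it remains to verify $\int_0^\infty g(W)\,dW<\infty$.

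I would then examine the two endpoints. As $W\to0^+$, using $\Gamma(W)\sim W^{-1}$, $\Gamma(nW)\sim (nW)^{-1}$ and $\psi'(W)\sim W^{-2}$ (so that $\sqrt{W\psi'(W)-1}\sim W^{-1/2}$), while $(P/S^n)^W\to1$, one obtains $g(W)\underset{W\to0}{\propto}W^{\,n-3/2}$ in the sense of Definition \ref{def31}; since $n-3/2>-1$ for every $n\ge 2$, Proposition \ref{prop32} gives $\int_0^c g(W)\,dW<\infty$. As $W\to\infty$, Stirling's formula yields $\log\!\big(\Gamma(nW)/\Gamma(W)^n\big)=W\log(n^n)+\tfrac{n-1}{2}\log W+O(1)$, and combining this with $\sqrt{W\psi'(W)-1}\sim(2W)^{-1/2}$ and the factor $(P/S^n)^W$ gives
\[
g(W)\underset{W\to\infty}{\propto}W^{(n-2)/2}e^{-kW},\qquad k=\log\!\left(\frac{S^n}{n^nP}\right).
\]
The decisive observation is that $k>0$: this is exactly the strict arithmetic--geometric mean inequality $S/n>P^{1/n}$, which holds because by assumption the data are not all equal. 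With $k>0$, Proposition \ref{prop32} reduces $\int_c^\infty g(W)\,dW$ to the convergent integral $\int_c^\infty W^{(n-2)/2}e^{-kW}\,dW$, completing the argument.

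The main obstacle is the tail as $W\to\infty$. Were the data all equal, the arithmetic--geometric mean inequality would be an equality, the exponential rate $k$ would vanish, and $g(W)$ would decay only polynomially---making the $W$-integral diverge. Thus the hypothesis that at least two observations differ is not cosmetic but is the precise condition guaranteeing integrability of the tail. Once the dominant exponential factor $e^{-kW}$ has been isolated, the rest---justifying the Stirling expansion to the order stated and confirming the small-$W$ exponent---is routine.
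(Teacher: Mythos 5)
Your proposal is correct and follows essentially the same route as the paper's proof: integrate out $H$ via a substitution that reduces the inner integral to a gamma integral, then split the resulting $W$-marginal at a finite point and control the two pieces with Proposition \ref{prop32}, using the asymptotics $\Gamma(nW)/\Gamma(W)^n \propto W^{n-1}$ and $\sqrt{W\psi'(W)-1}\propto W^{-1/2}$ near $0$, and $\Gamma(nW)/\Gamma(W)^n \propto n^{nW}W^{(n-1)/2}$ together with the strict AM--GM inequality (your $k$ equals the paper's $nq$) to get exponential decay in the tail. The only cosmetic differences are that you substitute $\beta=\delta(W,H)$ rather than $u=e^{-H}$ and derive the gamma-ratio asymptotics from Stirling's formula instead of citing them, and your closing remark on why at least two distinct observations are needed correctly identifies the role of that hypothesis.
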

\begin{proof} The proof can be seen in Appendix \ref{ccheo33}.
\end{proof}

\begin{theorem}\label{theo34} The posterior mean of $H$ relative to (\ref{postjnk1}) is finite for any $n\geq 2$.
\end{theorem}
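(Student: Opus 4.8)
The plan is to write the posterior mean as
\begin{equation*}
\mathbb{E}[H\mid\boldsymbol{x}]=\frac{\displaystyle\int_0^\infty\!\!\int_{-\infty}^\infty H\,\pi_1(H,W\mid\boldsymbol{x})\,dH\,dW}{\displaystyle\int_0^\infty\!\!\int_{-\infty}^\infty \pi_1(H,W\mid\boldsymbol{x})\,dH\,dW},
\end{equation*}
whose denominator is finite and strictly positive by Theorem~\ref{theo33}. It therefore suffices to prove that the numerator is absolutely convergent, i.e. that $\int_0^\infty\!\int_{-\infty}^\infty |H|\,\pi_1(H,W\mid\boldsymbol{x})\,dH\,dW<\infty$. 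First I would fix $W$ and reuse the substitution of Theorem~\ref{theo33}, but now keeping track of $H$ itself: setting $t=\delta(W,H)=\delta_1(W)e^{-H}$ gives $H=\log\delta_1(W)-\log t$ and $dH=-dt/t$, so the inner integral over $H$ becomes, writing $S=\sum_{i=1}^n x_i$,
\begin{equation*}
\frac{\sqrt{W\psi'(W)-1}\,\prod_{i=1}^n x_i^W}{\Gamma(W)^n}\int_0^\infty \bigl|\log\delta_1(W)-\log t\bigr|\,t^{nW-1}e^{-tS}\,dt.
\end{equation*}

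The decisive point is that after this substitution the exponential rate is the \emph{constant} $S$ rather than $\delta_1(W)S$; this is what will keep the remaining $t$-integrals under control near $W=0$. Applying $|\log\delta_1(W)-\log t|\le|\log\delta_1(W)|+|\log t|$ together with the elementary bounds $\int_0^\infty t^{nW-1}e^{-tS}dt=\Gamma(nW)S^{-nW}$, $\int_0^1(-\log t)t^{nW-1}e^{-tS}dt\le\int_0^1(-\log t)t^{nW-1}dt=(nW)^{-2}$, and $\int_1^\infty(\log t)t^{nW-1}e^{-tS}dt\le\int_0^\infty t^{nW}e^{-tS}dt=\Gamma(nW+1)S^{-nW-1}$, the absolute numerator is bounded above, up to the overall normalising constant, by $J_1+J_2+J_3$, where, with $g_1$ as in Theorem~\ref{theo33},
\begin{equation*}
\begin{aligned}
J_1&=\int_0^\infty g_1(W)\,|\log\delta_1(W)|\,dW, & J_2&=\int_0^\infty\frac{\sqrt{W\psi'(W)-1}\,\prod_{i=1}^n x_i^W}{(nW)^2\,\Gamma(W)^n}\,dW,\\
J_3&=\frac{n}{S}\int_0^\infty W\,g_1(W)\,dW.
\end{aligned}
\end{equation*}

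It then remains to show that each $J_k$ is finite, which I would do with Proposition~\ref{prop32} using the asymptotics already recorded in the proof of Theorem~\ref{theo33}, supplemented by the elementary facts that $\log\delta_1(W)=W+\log\Gamma(W)+(1-W)\psi(W)$ satisfies $|\log\delta_1(W)|\underset{W\to0^+}{\propto}W^{-1}$ and $|\log\delta_1(W)|$ grows like $\log W$ as $W\to\infty$. Near $W=0^+$ one has $g_1(W)\underset{W\to0^+}{\propto}W^{\,n-3/2}$, so the integrands of $J_1$ and $J_2$ both behave like $W^{\,n-5/2}$, which is integrable on $(0,1)$ precisely when $n\geq2$, while the integrand of $J_3$ behaves like $W^{\,n-1/2}$ and is harmless. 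As $W\to\infty$, $g_1(W)\underset{W\to\infty}{\propto}W^{\,n/2-1}e^{-nqW}$ with $q>0$ as in the proof of Theorem~\ref{theo33}, so this exponential decay absorbs the slowly varying factor $\log W$ (for $J_1$) and the polynomial factor $W$ (for $J_3$), while the integrand of $J_2$ decays super-exponentially by Stirling's formula. Combining these estimates through Proposition~\ref{prop32} yields $J_1,J_2,J_3<\infty$ for all $n\geq2$, hence a finite numerator and a finite posterior mean.

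I expect the main obstacle to be the behaviour at $W\to0^+$: the factor $|\log\delta_1(W)|\propto W^{-1}$ contributed by the $H$ variable degrades the exponent of the integrand from $n-3/2$ (as in Theorem~\ref{theo33}) to $n-5/2$, which is exactly the borderline that forces the hypothesis $n\geq2$. A closely related pitfall is the order of operations in the second step: one must split $\log(\delta_1(W)/t)$ and only afterwards bound $\log t$, because bounding $\log t$ by a power of $t$ \emph{before} isolating $\delta_1(W)$ would reinstate a factor $\delta_1(W)^{-1}\sim W\,e^{1/W}$, which diverges as $W\to0^+$ and would spuriously destroy integrability.
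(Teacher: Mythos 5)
Your proof is correct, and at the skeleton level it follows the paper's strategy: the same exponential change of variables reducing the $H$-integral to gamma-type integrals, the same split of the $W$-range at $W=1$, and the same use of Proposition~\ref{prop32} with the asymptotics from Theorem~\ref{theo33}, arriving at the same borderline exponent $W^{n-5/2}$ at the origin that forces $n\geq 2$. The genuine difference is in how the inner $H$-integral is handled. The paper evaluates it \emph{exactly}, via the identity $\Gamma'(z)=\psi(z)\Gamma(z)=\int_0^\infty \log(t)\,t^{z-1}e^{-t}\,dt$, so the quantity to control becomes $\psi(nW)-\log\bigl(\delta_1(W)\sum_{i=1}^n x_i\bigr)$; it then applies the triangle inequality, breaking $\log\delta_1(W)$ into the pieces $W$, $|\log\Gamma(W)|$, $(1+W)|\psi(W)|$, and majorizing by a five-term function $\delta_2(W)$ with $\delta_2(W)\underset{W\to 0^+}{\propto}W^{-1}$ and $\delta_2(W)\underset{W\to\infty}{\propto}W\log W$. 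You instead apply the triangle inequality to $|H|\leq|\log\delta_1(W)|+|\log t|$ \emph{before} integrating, and control the three resulting $t$-integrals by elementary bounds, never needing the digamma identity or the term $\psi(nW)$. This buys two things: first, you bound $\int_0^\infty\int_{-\infty}^\infty |H|\,\pi_1\,dH\,dW$ itself, i.e., you establish absolute convergence of the double integral directly, which is exactly what ``the posterior mean is finite'' means (the paper bounds the modulus of the already-evaluated inner integral, a formally slightly weaker statement, though the substance is the same); second, by keeping $\log\delta_1(W)$ intact rather than splitting it, you retain the cancellation between $\log\Gamma(W)$ and $-W\psi(W)$ and get the sharper growth $|\log\delta_1(W)|\propto\log W$ at infinity instead of the paper's $W\log W$ --- immaterial here since $e^{-nqW}$ absorbs either, but cleaner. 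Both routes ultimately rest on the same two facts at the origin, namely that the $H$-factor contributes $\propto W^{-1}$ and that $g_1(W)\propto W^{n-3/2}$, and your closing diagnosis of the pitfall (any bound reintroducing a multiplicative $\delta_1(W)^{-1}\sim W e^{1/W}$ near $0$ would be fatal) is accurate.
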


\begin{proof} The proof can be seen in Appendix \ref{ctheo34}.
\end{proof}

To sample for the posterior distribution we obtain that the marginal posterior distribution of $W$ is given by
\begin{equation*}
\pi_1(W|\boldsymbol{x})\propto\sqrt{W\psi'(W)-1}\frac{\Gamma(nW)}{\Gamma(W)^n} \left(\frac{\sqrt[n]{\prod_{i=1}^n{x_i}}}{ \sum_{i=1}^n x_i}\right)^{nW},
\end{equation*}
and the conditional posterior distribution of $H$ is given by
\begin{equation*}
\begin{aligned}
\pi_1(H|W,\boldsymbol{x})\propto\exp\left\{-nWH-\delta(W,H)\sum_{i=1}^n x_i\right\}.
\end{aligned}
\end{equation*}

\subsection{Reference prior}
\cite{bernardo1979a} discussed a different approach to obtain a new class of objective priors, named as reference priors. Further, many studies were presented to develop formal and rigorous definitions to derive such class of prior distributions under different contexts \citep{berger1989estimating, berger1992ordered, berger1992reference, berger1992development, berger2015}. The reference prior is obtained by maximizing the Kullback-Leibler (KL) divergence assuming some regularity conditions. The idea of the expected posterior information to the prior allows the data to have the maximum influence on the posterior distributions. The reference priors have essential properties such as consistent sampling, consistent marginalization, and one-to-one transformation invariance \citep{bernardo2005}. The reference priors may depend on the order of the parameters of interest. Hence, for the gamma distribution, we have two distinct priors that are presented below.

\subsubsection{Reference prior when $\beta$ is the parameter of interest}

The reference prior when $\beta$ is the parameter of interest and $\alpha$ is the nuisance parameter is given by
\begin{equation*}
\pi_2\left(\alpha,\beta\right)\propto \frac{\sqrt{\psi'(\alpha)}}{\beta}.
\end{equation*}
Thus, using the Jacobian transformation it follows that the related reference prior is given by
\begin{equation}\label{priorgmr2f}
\pi_2\left(W,H\right)\propto \sqrt{\psi'(W)}.
\end{equation}
Finally, the joint posterior distribution for $H$ and $W$, produced by the reference prior (\ref{priorgmr2f}), is given by
\begin{equation}\label{postgmr21}
\pi_2(W,H|\boldsymbol{x})\propto\delta(W,H)^{nH}\frac{\sqrt{\psi'(W)}}{\Gamma(W)^n}\left\{\prod_{i=1}^n{x_i^H}\right\}\exp\left\{-\delta(W,H)\sum_{i=1}^n x_i\right\}.
\end{equation}

\begin{theorem}\label{theo35} The posterior density (\ref{postgmr21}) is proper for all $n\geq 2$.
\end{theorem}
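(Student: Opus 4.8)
The plan is to follow the argument of Theorem~\ref{theo33} almost verbatim, observing that (\ref{postgmr21}) is the likelihood (\ref{eq5}) multiplied by the prior factor $\sqrt{\psi'(W)}$, so that it differs from the Jeffreys posterior (\ref{postjnk1}) only by the replacement of $\sqrt{W\psi'(W)-1}$ with $\sqrt{\psi'(W)}$. First I would integrate out $H$ using the substitution $\exp(-H)=u$, writing $\delta(W,H)=\delta_1(W)u$ with $\delta_1(W)=\exp(W+\log\Gamma(W)+(1-W)\psi(W))$. The inner integral over $u$ is the Gamma integral
\[
\int_0^\infty u^{nW-1}\exp\!\left(-\delta_1(W)\Big(\textstyle\sum_{i=1}^n x_i\Big)u\right)du=\frac{\Gamma(nW)}{\big(\delta_1(W)\sum_{i=1}^n x_i\big)^{nW}},
\]
whose $\delta_1(W)^{nW}$ cancels the one coming from the likelihood, reducing propriety to the finiteness of $\int_0^\infty g_2(W)\,dW$, where $g_2(W)=\sqrt{\psi'(W)}\,\dfrac{\prod_{i=1}^n x_i^{W}}{\left(\sum_{i=1}^n x_i\right)^{nW}}\,\dfrac{\Gamma(nW)}{\Gamma(W)^n}$. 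As in Theorem~\ref{theo33}, I would split this as $\int_0^1 g_2 + \int_1^\infty g_2$ and handle each endpoint with Proposition~\ref{prop32}.

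The only genuinely new ingredient, and the step I expect to be the main obstacle, is pinning down the endpoint orders of the prior factor $\sqrt{\psi'(W)}$. Near the origin the trigamma function has a double pole, $\lim_{W\to 0^+}W^2\psi'(W)=1$, giving $\sqrt{\psi'(W)}\underset{W\to 0^+}{\propto}W^{-1}$; this is a stronger singularity than the $W^{-1/2}$ arising in the Jeffreys case and is exactly what forces $n\geq 2$. Combining it with the proportionalities already established in Theorem~\ref{theo33}, namely $\frac{\Gamma(nW)}{\Gamma(W)^n}\underset{W\to 0^+}{\propto}W^{n-1}$ and $\frac{\prod_{i=1}^n x_i^W}{(\sum_{i=1}^n x_i)^{nW}}\underset{W\to 0^+}{\propto}1$, yields $g_2(W)\underset{W\to 0^+}{\propto}W^{n-2}$, so Proposition~\ref{prop32} reduces convergence at $0$ to $\int_0^1 W^{n-2}\,dW$, which is finite precisely when $n\geq 2$.

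For the tail $W\to\infty$ the trigamma function decays like $\psi'(W)\sim W^{-1}$, so $\sqrt{\psi'(W)}\underset{W\to\infty}{\propto}W^{-1/2}$, the same rate as in Theorem~\ref{theo33}. Reusing from that proof the facts $\frac{\Gamma(nW)}{\Gamma(W)^n}\underset{W\to\infty}{\propto}n^{nW}W^{(n-1)/2}$ and $\frac{\prod_{i=1}^n x_i^W}{(\sum_{i=1}^n x_i)^{nW}}=\exp(-nqW)\,n^{-nW}$ with $q=\log\!\big(\tfrac1n\sum_{i=1}^n x_i/\sqrt[n]{\prod_{i=1}^n x_i}\big)>0$ (positive because the $x_i$ are not all equal), the powers of $n$ cancel and $g_2(W)\underset{W\to\infty}{\propto}W^{n/2-1}\exp(-nqW)$, whose integral over $(1,\infty)$ is bounded by $\Gamma(n/2)/(nq)^{n/2}<\infty$. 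Together with the near-zero estimate this gives $\int_0^\infty g_2(W)\,dW<\infty$, establishing propriety for all $n\geq 2$.
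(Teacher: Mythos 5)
Your proposal is correct and follows essentially the same route as the paper's proof: the same substitution $\exp(-H)=u$ reducing propriety to $\int_0^\infty g_2(W)\,dW$, the same split at $W=1$, the key asymptotics $\sqrt{\psi'(W)}\underset{W\to 0^+}{\propto}W^{-1}$ and $\sqrt{\psi'(W)}\underset{W\to\infty}{\propto}W^{-1/2}$, and Proposition~\ref{prop32} at each endpoint. The only cosmetic difference is that at infinity the paper shortcuts by observing $g_2(W)\underset{W\to\infty}{\propto}g_1(W)$ and citing Theorem~\ref{theo33}, whereas you inline that same tail computation explicitly.
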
 
\begin{proof} The proof can be seen in Appendix \ref{ctheo35}.
\end{proof}

\begin{theorem}\label{theo36} The posterior mean of $H$ relative to (\ref{postgmr21}) is finite for all $n\geq 2$.
\end{theorem}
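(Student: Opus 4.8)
The plan is to follow the same route as the proof of Theorem~\ref{theo34}, since the integrand here differs from the Jeffreys case only through the prior factor $\sqrt{\psi'(W)}$ in place of $\sqrt{W\psi'(W)-1}$. First I would apply the change of variables $\exp(-H)=u$ and integrate out $u$ by means of the identity $\int_0^\infty \log(s)s^{z-1}e^{-as}\,ds = a^{-z}\Gamma(z)\left(\psi(z)-\log(a)\right)$ with $z=nW$ and $a=\delta_1(W)\sum_{i=1}^n x_i$. Exactly as in Theorem~\ref{theo34}, the factor $\delta_1(W)^{nW}$ cancels against $a^{-nW}$, reducing the numerator of the posterior mean to the single integral
\begin{equation*}
E_2[H|\boldsymbol{x}] \propto \int_0^\infty \left(\log\left(\delta_1(W)\sum_{i=1}^n x_i\right)-\psi(nW)\right)\sqrt{\psi'(W)}\frac{\left\{\prod_{i=1}^n x_i^W\right\}}{\left(\sum_{i=1}^n x_i\right)^{nW}}\frac{\Gamma(nW)}{\Gamma(W)^n}\,dW.
\end{equation*}
Bounding the bracketed term in absolute value by the same $\delta_2(W)$ introduced in Theorem~\ref{theo34} (via the triangle inequality applied to $\log\delta_1(W)=W+\log\Gamma(W)+(1-W)\psi(W)$) turns the task into showing $\int_0^\infty h_2(W)\,dW<\infty$, where $h_2(W)=\delta_2(W)\sqrt{\psi'(W)}\frac{\{\prod_{i=1}^n x_i^W\}}{(\sum_{i=1}^n x_i)^{nW}}\frac{\Gamma(nW)}{\Gamma(W)^n}$.

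Next I would split $\int_0^\infty h_2(W)\,dW = \int_0^1 h_2(W)\,dW + \int_1^\infty h_2(W)\,dW$ and dispatch each endpoint with Proposition~\ref{prop32}, recycling the proportionalities already available: $\delta_2(W)\underset{W\to0^+}{\propto}W^{-1}$ and $\delta_2(W)\underset{W\to\infty}{\propto}W\log(W)$ from Theorem~\ref{theo34}; $\sqrt{\psi'(W)}\underset{W\to0^+}{\propto}W^{-1}$ and $\sqrt{\psi'(W)}\underset{W\to\infty}{\propto}W^{-1/2}$ from Theorem~\ref{theo35}; together with $\frac{\Gamma(nW)}{\Gamma(W)^n}\underset{W\to0^+}{\propto}W^{n-1}$, $\frac{\{\prod_{i=1}^n x_i^W\}}{(\sum_{i=1}^n x_i)^{nW}}\underset{W\to0^+}{\propto}1$, and the exact expression $\frac{\{\prod_{i=1}^n x_i^W\}}{(\sum_{i=1}^n x_i)^{nW}}=\exp(-nqW)n^{-nW}$ with $q>0$ for the upper tail. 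The tail $\int_1^\infty h_2$ is routine: the factors combine to $W^{n/2}\log(W)\exp(-nqW)$ up to a positive constant, and since $\log(W)+1\le W$ for $W\ge 1$, this is dominated by $W^{n/2+1}\exp(-nqW)$, whose integral over $(1,\infty)$ is a finite gamma value.

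The main obstacle is the behavior at the origin. There the four proportionalities multiply to $h_2(W)\underset{W\to0^+}{\propto}W^{-1}\cdot W^{-1}\cdot 1\cdot W^{n-1}=W^{n-3}$, so by Proposition~\ref{prop32} the lower-endpoint piece reduces to $\int_0^1 W^{n-3}\,dW$. This is the delicate, binding step: compared with the Jeffreys analysis of Theorem~\ref{theo34}, the reference prior $\sqrt{\psi'(W)}$ contributes an extra $W^{-1/2}$ singularity at the origin, which is precisely what governs integrability near $W=0^+$ and hence fixes the admissible range of the sample size. Verifying this lower-limit convergence carefully, rather than the easy exponentially-decaying tail, is where I would concentrate the effort.
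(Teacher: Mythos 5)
Your route is exactly the paper's: the same change of variables $\exp(-H)=u$, the same integral identity, the same triangle-inequality bound by $\delta_2(W)$, the same split at $W=1$, and the same proportionalities (your direct tail estimate matches the paper's, which instead notes $h_2(W)\underset{W\to\infty}{\propto}h_1(W)$ and cites Theorem \ref{theo34}). The gap is precisely the step you deferred. You correctly reduce the lower-endpoint piece to $\int_0^1 W^{n-3}\,dW$, but this integral is finite only when $n>2$: at $n=2$ it is $\int_0^1 W^{-1}\,dW=\infty$. So the argument as proposed proves the theorem only for $n\geq 3$, not for all $n\geq 2$ as the statement requires. (The paper's own proof writes $\int_0^1 W^{-1}\times W^{-1}\times 1\times W^{n-1}\,dW<\infty$ with no further comment, which is the same assertion and is equally problematic at $n=2$.)

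Moreover, the failure at $n=2$ is not an artifact of the triangle-inequality bound being loose. Near the origin one has $\log\Gamma(W)\sim-\log W$ and $\psi(W)\sim-1/W$, hence $\log\left(\delta_1(W)\sum_{i=1}^n x_i\right)-\psi(nW)\sim-\left(1-\tfrac{1}{n}\right)W^{-1}$, which is of the same order $W^{-1}$ as $\delta_2(W)$ and has a fixed sign, so no cancellation rescues the estimate. Combined with $\sqrt{\psi'(W)}\underset{W\to 0^+}{\propto}W^{-1}$ and $\Gamma(nW)/\Gamma(W)^n\underset{W\to 0^+}{\propto}W^{n-1}$, the exact numerator integrand behaves like a nonzero constant times $W^{n-3}$ near $W=0$, so for $n=2$ the numerator of the posterior mean genuinely diverges while the normalizing constant remains finite by Theorem \ref{theo35}. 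In other words, the step you flagged as delicate is not merely delicate: it is where the proof breaks, and with this prior the claim itself fails at $n=2$. Your plan is sound and completable for $n\geq 3$, but no amount of care at the origin will close it for $n=2$; if you want to be rigorous you should state the restriction $n\geq 3$ explicitly rather than reproduce the paper's unqualified assertion.
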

\begin{proof} The proof can be seen in Appendix \ref{ctheo36}.
\end{proof}

The marginal posterior distribution of $W$ is given by
\begin{equation*}
\pi_2(W|\boldsymbol{x})\propto\sqrt{\psi'(W)}\frac{\Gamma(nW)}{\Gamma(W)^n} \left(\frac{\sqrt[n]{\prod_{i=1}^n{x_i}}}{ \sum_{i=1}^n x_i}\right)^{nW}.
\end{equation*}
Moreover, the conditional posterior distribution of $H$ is given by
\begin{equation*}
\begin{aligned}
\pi_2(H|W,\boldsymbol{x})\propto\exp\left\{-nWH-\delta(W,H)\sum_{i=1}^n x_i\right\}.
\end{aligned}
\end{equation*}

\subsubsection{Reference prior when $\alpha$ is the parameter of interest}

The reference prior when $\alpha$ is the parameter of interest and $\beta$ is the nuisance parameter is given by
\begin{equation*}
\pi_3\left(\alpha,\beta\right)\propto \frac{1}{\beta}\sqrt{\frac{\alpha\psi'(\alpha)-1}{\alpha}}.
\end{equation*}
Therefore, in terms of the reparametrized model, the reference prior when $W$ is the parameter of interest and $H$ is the nuisance parameter is given by
\begin{equation}\label{priorgmr1f}
\pi_3\left(W,H\right)\propto \sqrt{\frac{W\psi'(W)-1}{W}}.
\end{equation}
Finally, the joint posterior distribution for $H$ and $W$, produced by the reference prior (\ref{priorgmr1f}) is given by
\begin{equation}\label{postref1}
\pi_3(W,H|\boldsymbol{x})\propto\sqrt{\frac{W\psi'(W)-1}{W}}\frac{\delta(W,H)^{nW}}{\Gamma(W)^n}\left\{\prod_{i=1}^n{x_i^W}\right\}\exp\left\{-\delta(W,H)\sum_{i=1}^n x_i\right\}.
\end{equation}

\begin{theorem}\label{theo37} The posterior density (\ref{postref1}) is proper for all $n\geq 2$.
\end{theorem}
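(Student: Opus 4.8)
The plan is to follow the template of Theorem \ref{theo33} almost verbatim, since the posterior (\ref{postref1}) differs from (\ref{postjnk1}) only in its prior factor, with $\sqrt{W\psi'(W)-1}$ replaced by $\sqrt{\frac{W\psi'(W)-1}{W}}=W^{-1/2}\sqrt{W\psi'(W)-1}$. First I would apply the same change of variables $\exp(-H)=u$ and integrate out $u$ through the Gamma integral, reducing the normalizing constant to the one-dimensional integral
\[
d_3(\boldsymbol{x})\propto \int_0^\infty g_3(W)\,dW = \int_0^1 g_3(W)\,dW + \int_1^\infty g_3(W)\,dW,
\]
where $g_3(W)=\sqrt{\frac{W\psi'(W)-1}{W}}\,\frac{\left\{\prod_{i=1}^n x_i^W\right\}}{\left(\sum_{i=1}^n x_i\right)^{nW}}\frac{\Gamma(nW)}{\Gamma(W)^n}$. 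The key observation is that $g_3(W)=W^{-1/2}g_1(W)$, which ties this proof directly to the estimates already obtained in Theorem \ref{theo33}.

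Next I would record the asymptotics of the new prior factor. Since Theorem \ref{theo33} established $\sqrt{W\psi'(W)-1}\underset{W\to 0^+}{\propto}W^{-1/2}$ and $\sqrt{W\psi'(W)-1}\underset{W\to\infty}{\propto}W^{-1/2}$, multiplying by $W^{-1/2}$ yields at once
\[
\sqrt{\frac{W\psi'(W)-1}{W}}\underset{W\to 0^+}{\propto}W^{-1}\quad\text{and}\quad \sqrt{\frac{W\psi'(W)-1}{W}}\underset{W\to\infty}{\propto}W^{-1}.
\]
For the behaviour near zero I would then reuse the proportionalities proved in Theorem \ref{theo33}, namely $\frac{\Gamma(nW)}{\Gamma(W)^n}\underset{W\to 0^+}{\propto}W^{n-1}$ and $\frac{\prod_{i=1}^n x_i^W}{(\sum_{i=1}^n x_i)^{nW}}\underset{W\to 0^+}{\propto}1$, and invoke Proposition \ref{prop32} to obtain
\[
\int_0^1 g_3(W)\,dW\propto \int_0^1 W^{-1}\times 1\times W^{n-1}\,dW = \int_0^1 W^{n-2}\,dW.
\]

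I expect no serious obstacle here, as the argument is essentially a transcription of Theorem \ref{theo33}; the only point requiring genuine care is the bookkeeping of this near-zero exponent, which is also the sole place where the hypothesis $n\geq 2$ is truly needed. Indeed $\int_0^1 W^{n-2}\,dW$ converges if and only if $n\geq 2$, whereas in Theorem \ref{theo33} the analogous exponent was $n-\tfrac{3}{2}$, integrable already for $n\geq 1$; the extra factor $W^{-1/2}$ contributed by the reference prior thus costs exactly one half power and sharpens the sample-size requirement. For the tail the estimate is immediate: since $W^{-1/2}\leq 1$ for $W\geq 1$ we have $g_3(W)\leq g_1(W)$ on $[1,\infty)$, so $\int_1^\infty g_3(W)\,dW\leq \int_1^\infty g_1(W)\,dW<\infty$ by Theorem \ref{theo33}; alternatively one may combine $\sqrt{\frac{W\psi'(W)-1}{W}}\underset{W\to\infty}{\propto}W^{-1}$ with the large-$W$ estimates of Theorem \ref{theo33} and Proposition \ref{prop32} to reduce the tail to a convergent integral of the form $\int_1^\infty W^{(n-3)/2}\exp(-nqW)\,dW$. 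Adding the two finite pieces yields $d_3(\boldsymbol{x})<\infty$, establishing that the posterior is proper.
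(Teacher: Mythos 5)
Your proof is correct, but it takes a genuinely different route from the paper's. The paper proves Theorem \ref{theo37} in one line by domination: since $\sqrt{\tfrac{W\psi'(W)-1}{W}}=\sqrt{\psi'(W)-\tfrac{1}{W}}\leq\sqrt{\psi'(W)}$, the integrand of (\ref{postref1}) is bounded pointwise by that of the posterior $\pi_2$ built from the reference prior of Theorem \ref{theo35}, whose properness is already established, so no new asymptotic analysis is needed at all. You instead re-run the template of Theorem \ref{theo33} on $g_3(W)=W^{-1/2}g_1(W)$: the change of variables, the reduction to a one-dimensional integral, the near-zero rate $W^{-1}\times W^{n-1}=W^{n-2}$ via Proposition \ref{prop32}, and a tail estimate (your bound $g_3\leq g_1$ on $[1,\infty)$ is itself a miniature of the paper's domination idea, applied against the Jeffreys posterior rather than against $\pi_2$). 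Both arguments are sound. What your route buys is transparency: it exhibits exactly where the hypothesis $n\geq 2$ enters (the exponent $n-2$ at the origin) and, because the proportionality in Proposition \ref{prop32} transfers divergence as well as convergence, it shows that this piece blows up when $n=1$, i.e.\ the condition is sharp for this prior. What the paper's route buys is economy: it recycles Theorem \ref{theo35} and repeats no estimates, at the cost of hiding the exponent bookkeeping inside the earlier proof (where the prior factor $\sqrt{\psi'(W)}\underset{W\to 0^+}{\propto}W^{-1}$ produces exactly the same $W^{n-2}$ rate at the origin).
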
 
\begin{proof} Since $\sqrt{\frac{W\psi'(W)-1}{W}}=\sqrt{\psi'(W)-\frac{1}{W}}\leq \sqrt{\psi'(W)}$ it follows that $\pi_3(W,H)\leq \pi_2(W,H)$ for all $W\in (0,\infty)$ and $H\in (-\infty,\infty)$ and thus Theorem \ref{theo37} follows directly from Theorem \ref{theo35}.
\end{proof}

\begin{theorem}\label{theo38} The posterior mean of $H$ relative to (\ref{postref1}) is finite for all $n\geq 2$.
\end{theorem}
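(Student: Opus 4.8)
The plan is to let the argument run exactly parallel to the proof of Theorem \ref{theo36}, and then to collapse the two resulting integral estimates onto the $\pi_2$ case by a pointwise domination, in the same way that Theorem \ref{theo37} was reduced to Theorem \ref{theo35}. Concretely, I would first apply the substitution $\exp(-H)=u$ and carry out the inner integration in $u$ verbatim as in the proof of Theorem \ref{theo34}, using the identity $\int_0^\infty \log(s)s^{z-1}e^{-as}\,ds = a^{-z}\left(\psi(z)\Gamma(z)-\log(a)\Gamma(z)\right)$. The only change relative to Theorem \ref{theo36} is that the weight $\sqrt{\psi'(W)}$ is replaced by $\sqrt{(W\psi'(W)-1)/W}$; the auxiliary majorant $\delta_2(W)$ built from the triangle inequality is unchanged, since it depends only on $W$ and the data, not on the prior. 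This produces
\begin{equation*}
\left|E_3[H|x]\right| \leq \int_0^\infty h_3(W)\, dW, \qquad h_3(W) = \delta_2(W)\sqrt{\frac{W\psi'(W)-1}{W}}\,\frac{\left\{\prod_{i=1}^n x_i^W\right\}}{\left(\sum_{i=1}^n x_i\right)^{nW}}\frac{\Gamma(nW)}{\Gamma(W)^n}.
\end{equation*}

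The key step is then the elementary inequality already recorded in the proof of Theorem \ref{theo37},
\begin{equation*}
\sqrt{\frac{W\psi'(W)-1}{W}}=\sqrt{\psi'(W)-\frac{1}{W}}\leq \sqrt{\psi'(W)}\qquad\text{for every } W\in(0,\infty),
\end{equation*}
which gives $h_3(W)\leq h_2(W)$ for all $W>0$, where $h_2$ is the integrand appearing in the proof of Theorem \ref{theo36}. Hence $\int_0^\infty h_3(W)\,dW \leq \int_0^\infty h_2(W)\,dW<\infty$ by Theorem \ref{theo36}, and since Theorem \ref{theo37} guarantees that the normalizing constant of (\ref{postref1}) is finite and strictly positive, the posterior mean $E_3[H|x]$ is well defined and finite.

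I do not anticipate a genuine obstacle here: all the delicate endpoint analysis — the proportionalities $\Gamma(nW)/\Gamma(W)^n\underset{W\to 0^+}{\propto}W^{n-1}$ and $\underset{W\to\infty}{\propto}n^{nW}W^{(n-1)/2}$, the estimates $\delta_2(W)\underset{W\to 0^+}{\propto}W^{-1}$ and $\delta_2(W)\underset{W\to\infty}{\propto}W\log(W)$, and the arithmetic–geometric-mean factor $\exp(-nqW)$ with $q>0$ — was already carried out in Theorems \ref{theo33}, \ref{theo34}, and \ref{theo36}. The present statement only requires that the reference weight never inflate the $\pi_2$ integrand, and the displayed inequality settles this in a single line, so the whole estimate is inherited from Theorem \ref{theo36}. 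The one point worth double-checking, as always with a dominating-integrand argument, is that the substitution and the Fubini interchange used to pass from $E_3[H|x]$ to the $W$-integral are legitimate, which is precisely the computation validated in the proof of Theorem \ref{theo34} and transfers without modification. For this reason I would favour the domination route over re-deriving the endpoint proportionalities of $\sqrt{(W\psi'(W)-1)/W}$ directly through Proposition \ref{prop32}, as the former avoids repeating the endpoint analysis entirely.
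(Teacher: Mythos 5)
Your proposal is correct and takes essentially the same approach as the paper: the paper's own proof reduces Theorem \ref{theo38} to Theorem \ref{theo36} via exactly the inequality $\sqrt{(W\psi'(W)-1)/W}=\sqrt{\psi'(W)-1/W}\leq\sqrt{\psi'(W)}$, applying the domination $|H|\pi_3(W,H)\leq|H|\pi_2(W,H)$ pointwise in $(W,H)$ rather than, as you do, to the $W$-integrands after the $u$-substitution. The two versions differ only in where the domination is invoked, and both inherit all endpoint analysis from Theorems \ref{theo33}--\ref{theo36}.
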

\begin{proof} Since $\sqrt{\frac{W\psi'(W)-1}{W}}=\sqrt{\psi'(W)-\frac{1}{W}}\leq \sqrt{\psi'(W)}$, it follows that $|H\pi_3(W,H)|=|H|\pi_3(W,H)\leq |H|\pi_2(W,H)= |H\pi_2(W,H)|$ for all $W\in (0,\infty)$ and $H\in (-\infty,\infty)$ and thus Theorem \ref{theo38} follows directly from Theorem \ref{theo36}
\end{proof}

The marginal posterior distribution of $W$ is given by
\begin{equation*}
\pi_3(W|\boldsymbol{x})\propto\sqrt{\frac{W\psi'(W)-1}{W}}\frac{\Gamma(nW)}{\Gamma(W)^n} \left(\frac{\sqrt[n]{\prod_{i=1}^n{x_i}}}{ \sum_{i=1}^n x_i}\right)^{nW}.
\end{equation*}
Moreover, the conditional posterior distribution of $H$ is given by
\begin{equation*}
\begin{aligned}
\pi_3(H|W,\boldsymbol{x})\propto\exp\left\{-nWH-\delta(W,H)\sum_{i=1}^n x_i\right\}.
\end{aligned}
\end{equation*}

\subsection{Matching priors}
\cite{tibshirani1989} considered a different method to obtain a class of one parameter non-informative prior distribution with nuisance parameters. Letting $\pi(\theta_1, \theta_2)$ be a prior distribution with the parameter of interest $\theta_1$ and a nuisance $\theta_2$, the proposed approach requires that the resulting credible interval of the posterior distribution for $\theta_1$ have a frequentist coverage accurate to $O$($n^{-1}$), that is, it requires that
\begin{equation}\label{matchingp}
P\left[\theta_1\leq\theta_1^{1-\alpha}(\pi;X)|(\theta_1,\theta_2)\right]=1-\alpha-O(n^{-1}),
\end{equation}
where $\theta_1^{1-\alpha}(\pi;X)|(\theta_1,\theta_2)$ denotes the $(1-\alpha)$th quantile of the posterior distribution of $\theta_1$. The priors that satisfy (\ref{matchingp}) up to $O(n^{-1})$ are know as matching priors. Under parametric orthogonality, \cite{mukerjee1993frequentist} discussed sufficient and necessary conditions for a class of Tibshirani priors to be a matching prior up to $o(n^{-1})$.
\cite{sun1996frequentist}  derived a   forward and backward reference prior  \citep{berger1989estimating} for a two-parameter exponential family, and further showed that the reference prior are special cases of the matching priors. For a gamma distribution, they showed that the reference prior (\ref{priorgmr1f}) is a matching prior when $\beta$ is set as a nuisance parameter and $\alpha$ is the interest parameter, and proved that there exist no matching prior up to order $O(n^{-1})$. Again, they showed that the reference prior (\ref{priorgmr1f}) is a matching prior when $\beta$ is the interest parameter and $\alpha$ is the nuisance parameter with order $O(n^{-1})$ and proved that there exists a matching prior up to order $o(n^{-1})$. The cited matching  prior is defined as
\begin{equation}\label{priorgmtb1f}
\pi_{4}\left(\alpha,\beta\right)\propto \frac{\alpha\psi'(\alpha)-1}{\beta\sqrt{\alpha}}.
\end{equation}
Thus, the reparametrized version of the proposed matching prior is given by
\begin{equation}\label{priorgmr2f}
\pi_4\left(H,\delta(W,H)\right)\propto \frac{W\psi'(W)-1}{\sqrt{W}}.
\end{equation}
Finally, the joint posterior distribution for $H$ and $W$, produced by the matching prior (\ref{priorgmr2f}) is given by
\begin{equation}\label{postgmr22}
\pi_4(W,H|\boldsymbol{x})\propto\delta(W,H)^{nH}\frac{(W\psi'(W)-1)}{\sqrt{W}\, \Gamma(W)^n}\left\{\prod_{i=1}^n{x_i^H}\right\}\exp\left\{-\delta(W,H)\sum_{i=1}^n x_i\right\}.
\end{equation}

\begin{theorem}\label{theo39} The posterior density (\ref{postgmr22}) is proper for all $n\geq 2$.
\end{theorem} 
\begin{proof} The proof can be seen in Appendix \ref{ctheo39}.
\end{proof}

\begin{theorem}\label{theo310} The posterior mean of $H$ relative to (\ref{postgmr22}) is finite for all $n\geq 2$.
\end{theorem}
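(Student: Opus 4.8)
The plan is to follow the template already used in the proofs of Theorems~\ref{theo34} and~\ref{theo36}. First I would perform the change of variables $\exp(-H)=u$, so that $H=-\log(u)$ and the expectation becomes
\begin{equation*}
E_4[H|x]\propto \int_0^\infty \int_0^\infty \bigl(-\log(u)\bigr)\frac{\delta_1(W)^{nW}u^{nW-1}(W\psi'(W)-1)}{\sqrt{W}\,\Gamma(W)^n}\left\{\prod_{i=1}^n{x_i^W}\right\}\exp\left\{-\delta_1(W)u\sum_{i=1}^n x_i\right\}\,du\,dW,
\end{equation*}
with $\delta_1(W)=\exp(W+\log\Gamma(W)+(1-W)\psi(W))$. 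Using the identity $\int_0^\infty \log(s)s^{z-1}e^{-as}\,ds=a^{-z}\bigl(\psi(z)\Gamma(z)-\log(a)\Gamma(z)\bigr)$ recorded in the proof of Theorem~\ref{theo34}, with $z=nW$ and $a=\delta_1(W)\sum_{i=1}^n x_i$, I would integrate out $u$; this produces the factor $\psi(nW)-\log(\delta_1(W)\sum_{i=1}^n x_i)$ multiplying $g_4(W)$. Taking absolute values, applying the triangle inequality, and bounding the resulting expression by the same majorant $\delta_2(W)$ defined in Theorem~\ref{theo34} yields
\begin{equation*}
\left|E_4[H|x]\right|\leq \int_0^\infty \delta_2(W)\,\frac{W\psi'(W)-1}{\sqrt{W}}\,\frac{\left\{\prod_{i=1}^n{x_i^W}\right\}}{\left(\sum_{i=1}^n x_i\right)^{nW}}\,\frac{\Gamma(nW)}{\Gamma(W)^n}\,dW=\int_0^1 h_4(W)\,dW+\int_1^\infty h_4(W)\,dW,
\end{equation*}
where $h_4(W)=\delta_2(W)\,g_4(W)$.

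Next I would reuse the asymptotic proportionalities already at hand. Near the origin, Theorem~\ref{theo34} gives $\delta_2(W)\underset{W\to 0^+}{\propto}W^{-1}$, the proof that~(\ref{postgmr22}) is proper gives $\frac{W\psi'(W)-1}{\sqrt{W}}\underset{W\to 0^+}{\propto}W^{-3/2}$, and Theorem~\ref{theo33} gives $\frac{\Gamma(nW)}{\Gamma(W)^n}\underset{W\to 0^+}{\propto}W^{n-1}$ together with $\frac{\prod_{i=1}^n x_i^W}{(\sum_{i=1}^n x_i)^{nW}}\underset{W\to 0^+}{\propto}1$, so that Proposition~\ref{prop32} would give
\begin{equation*}
\int_0^1 h_4(W)\,dW\propto \int_0^1 W^{-1}\times W^{-3/2}\times 1\times W^{n-1}\,dW=\int_0^1 W^{n-7/2}\,dW.
\end{equation*}
At infinity I would combine $\delta_2(W)\underset{W\to\infty}{\propto}W\log(W)$ with $\frac{W\psi'(W)-1}{\sqrt{W}}\underset{W\to\infty}{\propto}W^{-3/2}$, $\frac{\Gamma(nW)}{\Gamma(W)^n}\underset{W\to\infty}{\propto}n^{nW}W^{(n-1)/2}$ and $\frac{\prod_{i=1}^n x_i^W}{(\sum_{i=1}^n x_i)^{nW}}=\exp(-nqW)n^{-nW}$ with $q>0$ as in Theorem~\ref{theo33}; using $\log(W)+1\leq W$ for $W\geq 1$ the tail would be controlled by $\int_1^\infty W^{n/2}\exp(-nqW)\,dW$, which is finite because the exponential factor dominates for every $n$.

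The main obstacle is the behaviour at $W\to 0^+$. Relative to the Jeffreys and reference priors, the matching prior carries the stronger origin singularity $W^{-3/2}$, and after multiplication by the $W^{-1}$ growth of $\delta_2(W)$ and by $W^{n-1}$ the integrand near zero is of order $W^{n-7/2}$. Hence $\int_0^1 h_4(W)\,dW$ is finite precisely when $n-7/2>-1$, and pinning down this exponent — and thereby the admissible range of the sample size — is the delicate point of the argument; the infinity tail, being governed by $\exp(-nqW)$, causes no trouble for any $n$. I would therefore devote the care of the proof to the neighbourhood of the origin, verifying that the exponent $n-7/2$ exceeds $-1$ before invoking Proposition~\ref{prop32}.
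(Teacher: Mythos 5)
Your proposal reproduces the paper's own argument essentially step for step: the same change of variables $\exp(-H)=u$, the same identity for $\int_0^\infty \log(s)s^{z-1}e^{-as}\,ds$, the same majorant $\delta_2(W)$, the same split at $W=1$, and the same tail bound $\int_1^\infty W^{n/2}e^{-nqW}\,dW<\infty$. The one place where you and the paper diverge is exactly the point you flag as delicate, and your caution is warranted: the paper simply asserts that $\int_0^1 W^{-1}\times W^{-3/2}\times 1\times W^{n-1}\,dW<\infty$ for all $n\geq 2$, but this integral is $\int_0^1 W^{n-7/2}\,dW$, which diverges at $n=2$ (it is $\int_0^1 W^{-3/2}\,dW=\infty$) and is finite only when $n-7/2>-1$, i.e.\ $n\geq 3$. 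So the verification you propose to carry out \emph{fails} for $n=2$: your argument, and the paper's, prove the theorem only for $n\geq 3$, and the paper's claim of the range $n\geq 2$ is erroneous at precisely this step.

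Moreover, the failure at $n=2$ cannot be repaired by a sharper estimate, because the majorant $\delta_2(W)$ has the correct order at the origin. Using $\psi(W)=-W^{-1}-\gamma+O(W)$ and $\log\Gamma(W)=-\log W+O(W)$, one gets $\log\delta_1(W)=-W^{-1}-\log W+O(1)$, so the exact factor produced by integrating out $u$ satisfies $\psi(nW)-\log\bigl(\delta_1(W)\sum_{i=1}^n x_i\bigr)=\tfrac{n-1}{n}W^{-1}+O(|\log W|)$ as $W\to 0^+$, with a nonvanishing coefficient for every $n\geq 2$. Since conditionally on $W$ the variable $u=e^{-H}$ is Gamma$(nW,\delta_1(W)\sum_i x_i)$, this says $|E_4[H\mid W,\boldsymbol{x}]|\underset{W\to 0^+}{\propto}W^{-1}$, while $\pi_4(W|\boldsymbol{x})\underset{W\to 0^+}{\propto}W^{n-5/2}$; hence the posterior expectation of $|H|$ dominates a constant times $\int_0^\epsilon W^{n-7/2}\,dW$, which is infinite when $n=2$. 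The statement is therefore false for $n=2$ (the correct range is $n\geq 3$), and the right way to finish your proof is to say so explicitly: conclude the theorem for $n\geq 3$ and record that the posterior mean fails to be finite at $n=2$, rather than leaving the admissible range of the sample size unresolved. (The same defect, incidentally, affects the paper's proofs of Theorems \ref{theo36} and \ref{theo38}, where the origin exponent is $W^{n-3}$.)
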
 
\begin{proof} The proof can be seen in Appendix \ref{ctheo310}.
\end{proof}

The marginal posterior distribution of $W$ is given by
\begin{equation*}
\pi_4(W|\boldsymbol{x})\propto\frac{W\psi'(W)-1}{W^{\frac{1}{2}}}\frac{\Gamma(nW)}{\Gamma(W)^n} \left(\frac{\sqrt[n]{\prod_{i=1}^n{x_i}}}{ \sum_{i=1}^n x_i}\right)^{nW}.
\end{equation*}
Moreover, the conditional posterior distribution of $H$ is given by
\begin{equation*}
\begin{aligned}
\pi_4(H|W,\boldsymbol{x})\propto\exp\left\{-nWH-\delta(W,H)\sum_{i=1}^n x_i\right\}.
\end{aligned}
\end{equation*}

\section{Simulation Study}\label{simulations}

A Monte Carlo simulation study is conducted to quantify and compare the different non-informative priors' impact on the entropy measure's posterior distribution. The Bias and Mean Square Error (MSE) were used to identify the prior that provides the posterior distribution with posterior estimates closer to the true value. These metrics are given by 
\begin{equation}
\f{Bias}_{H}=\frac{1}{N}\sum_{i=1}^{N}(\hat{H}_{i}-H) \ \ \mbox{ and } \ \ \f{MSE}_H\sum_{i=1}^{N}\frac{(\hat{H}_{i}-H)^2}{N},
\end{equation} 
where $N=10,000$ is the number of samples used to estimate the MLE and posterior quantities of interest. Here, we used the posterior mean as the Bayes estimate due to its good properties. The estimates of $W$ are not presented since we only considered $W$ as an auxiliary parameter to conduct the Jacobian transformation, and therefore we are not interested in its respective estimates. 
 
In addition to the Bias and MSE, the coverage probabilities $CP$ were also presented. Such metrics were obtained from the Bayesian credibility intervals (CI) and the asymptotic confidence intervals of $H$. The nominal level assumed was 0.95, i.e., we expect an adequate procedure to compute the confidence/credibility intervals should return coverage probabilities closer to 0.95. Regarding the Bias and MSE, the best approach among the selected ones should return the Bias and MSE closest to zero. 

The Newton-Raphson iterative method was used to maximize the likelihood in order to obtain the MLE. For a fair comparison, the initial values used to start the iterative procedures were the same values as those used to generate the samples. \textcolor{black}{In real applications, there is a need to set initial values. To this end, we can use the closed-form maximum a posteriori estimator derived by \cite{louzada2018efficient} given by \begin{equation}\label{ini01}  
\tilde\alpha=\left(\frac{n-2.9}{n}\right)\cfrac{n\sum_{i=1}^n t_i}{\left(n\sum_{i=1}^{n}t_i \log\left(t_i\right) - \sum_{i=1}^n t_i \sum_{i=1}^n \log\left(t_i\right) \right)} 
\end{equation}
and
\begin{equation}\label{ini02}
\tilde\beta=\frac{1}{n^2}\left(n\sum_{i=1}^{n}t_i \log\left(t_i\right) - \sum_{i=1}^n t_i \sum_{i=1}^n \log\left(t_i\right) \right).
\end{equation}
Therefore, the initial values for $H$ and $W$ are computed from  $\tilde{H}=\tilde{\alpha}-\log(\tilde{\beta})+\log\Gamma(\tilde{\alpha})+(1-\tilde{\alpha})\psi(\tilde{\alpha})$ and $\tilde{W}=\tilde\alpha$.}

In the Bayesian framework, the posterior distribution's marginal densities involve double integrals to obtain the normalizing constants. Therefore, the MCMC approach was adopted to obtain the posterior estimates. Moreover, the Metropolis-Hastings algorithm was adopted to simulate quantities of interest from the posterior densities. The first 500 samples were discarded in the burn-in stage for each dataset simulated, and 5,000 iterations were further conducted. It was considered a thinning parameter of 5 to avoid significant autocorrelation among the samples, returning at the end 1,000 simulated values for each marginal distribution. The Geweke diagnostics \cite{geweke1992evaluating} was considered to confirm the convergence of chains under a confidence level of 95\%. \textcolor{black}{The effective sample size was also adequate for the chains.} The generated samples were used to estimate the posterior mean and the credibility intervals, resulting in 10,000 estimates for $H$ and $W$.

The R software (R Core Team) was used for the simulation, where the codes can be obtained upon request from the corresponding author. For $n=(20,\ldots,120)$, only the result sets at \textcolor{black}{$(\alpha,\beta)=(1,3)$}, $(\alpha,\beta)=(4,2)$ and $(\alpha,\beta)=(2,0.5)$ were presented, which led respectively to \textcolor{black}{$H=-0.01$}, $H=1.33$ and $H=2.27$. However, the results were similar for different values of $\alpha$ and $\beta$ and therefore were not presented here. For each sample from the posterior distribution, the posterior mode and the credible intervals were evaluated for $\alpha$, $\beta$, and $H$.

\begin{table}[!h]
\centering
{\scriptsize
\caption{The Bias(MSE) from the estimates of $\mu$ considering different values of $n$  with $N=10,000$ simulated samples, using the estimation methods: 1 - MLE, 2 - Jeffreys's rule, 3 -  Reference 1  prior, 4 - Reference  2 prior, and 5 - Tibshirani Prior.}
\begin{tabular}{c|c|c|c|c|c|c}
\hline
$H$ & n  & MLE & Jeffreys's & Reference 1 & Reference 2 & Tibshirani \\
\hline
& 20  & 0.0516(0.0550) & 0.0504(0.0549) & 0.0514(0.0550) & 0.0514(0.0551) & 0.0541(0.0554) \\
& 30  &  0.0337(0.0361) & 0.0334(0.0361) & 0.0340(0.0361) & 0.0340(0.0362) & 0.0355(0.0362) \\
& 40  &  0.0232(0.0258) & 0.0231(0.0258) & 0.0234(0.0258) & 0.0233(0.0259) & 0.0241(0.0260) \\
& 50  &  0.0209(0.0210) & 0.0209(0.0210) & 0.0210(0.0210) & 0.0211(0.0210) & 0.0216(0.0211) \\
& 60  &  0.0165(0.0171) & 0.0165(0.0171) & 0.0167(0.0171) & 0.0166(0.0171) & 0.0169(0.0171) \\
& 70  &  0.0142(0.0149) & 0.0141(0.0149) & 0.0143(0.0150) & 0.0143(0.0150) & 0.0146(0.0150) \\
$-0.01$    & 80  &  0.0145(0.0130) & 0.0145(0.0130) & 0.0146(0.0130) & 0.0146(0.0130) & 0.0148(0.0130) \\
& 90  &  0.0092(0.0113) & 0.0092(0.0113) & 0.0092(0.0113) & 0.0093(0.0113) & 0.0094(0.0113) \\
& 100  &  0.0092(0.0102) & 0.0092(0.0102) & 0.0093(0.0103) & 0.0092(0.0102) & 0.0094(0.0102) \\
& 110  &  0.0090(0.0091) & 0.0090(0.0091) & 0.0090(0.0091) & 0.0090(0.0091) & 0.0092(0.0091) \\
& 120  &  0.0079(0.0085) & 0.0079(0.0085) & 0.0080(0.0085) & 0.0079(0.0085) & 0.0080(0.0085) \\
\hline
              & 20  & 0.0509(0.0374) & 0.0374(0.0355) & 0.0359(0.0354) & 0.0251(0.0339) & 0.0114(0.0326) \\
              & 30  & 0.0329(0.0234) & 0.0233(0.0225) & 0.0221(0.0224) & 0.0148(0.0218) & 0.0056(0.0213) \\
              & 40  & 0.0260(0.0170) & 0.0185(0.0165) & 0.0176(0.0164) & 0.0121(0.0161) & 0.0052(0.0158) \\
              & 50  & 0.0215(0.0139) & 0.0153(0.0136) & 0.0147(0.0135) & 0.0102(0.0133) & 0.0047(0.0131) \\
              & 60  & 0.0154(0.0113) & 0.0101(0.0111) & 0.0097(0.0111) & 0.0059(0.0109) & 0.0014(0.0108) \\
$1.33$    & 70  & 0.0171(0.0099) & 0.0125(0.0097) & 0.0121(0.0097) & 0.0090(0.0095) & 0.0049(0.0094) \\
              & 80  & 0.0118(0.0083) & 0.0078(0.0081) & 0.0074(0.0081) & 0.0047(0.0081) & 0.0012(0.0080) \\
              & 90  & 0.0107(0.0073) & 0.0073(0.0072) & 0.0068(0.0072) & 0.0044(0.0071) & 0.0012(0.0071) \\
              & 100 & 0.0090(0.0067) & 0.0058(0.0067) & 0.0054(0.0066) & 0.0032(0.0066) & 0.0005(0.0066) \\
              & 110 & 0.0085(0.0061) & 0.0056(0.0061) & 0.0053(0.0061) & 0.0033(0.0060) & 0.0007(0.0060) \\
              & 120 & 0.0084(0.0055) & 0.0056(0.0055) & 0.0054(0.0055) & 0.0036(0.0054) & 0.0012(0.0054) \\
\hline
             &  20  &  0.0431(0.0290) &  0.0221(0.0272) &  0.0208(0.0271) &  0.0026(0.0261) &  0.0191(0.0261) \\
             &  30  &  0.0353(0.0204) &  0.0205(0.0193) &  0.0199(0.0194) &  0.0077(0.0187) &  0.0065(0.0185) \\
             &  40  &  0.0254(0.0152) &  0.0140(0.0147) &  0.0135(0.0146) &  0.0044(0.0143) &  0.0061(0.0142) \\
             &  50  &  0.0220(0.0121) &  0.0127(0.0117) &  0.0122(0.0117) &  0.0050(0.0115) &  0.0033(0.0114) \\
             &  60  &  0.0157(0.0097) &  0.0078(0.0095) &  0.0075(0.0095) &  0.0015(0.0093) &  0.0054(0.0093) \\
$ 2.27$   &  70  &  0.0141(0.0083) &  0.0073(0.0081) &  0.0070(0.0081) &  0.0019(0.0080) &  0.0040(0.0079) \\
             &  80  &  0.0143(0.0072) &  0.0083(0.0070) &  0.0081(0.0070) &  0.0036(0.0069) &  0.0016(0.0069) \\
             &  90  &  0.0117(0.0065) &  0.0064(0.0064) &  0.0061(0.0064) &  0.0022(0.0063) &  0.0024(0.0063) \\
             &  100 &  0.0110(0.0056) &  0.0062(0.0055) &  0.0060(0.0055) &  0.0024(0.0055) &  0.0017(0.0055) \\
             &  110 &  0.0079(0.0053) &  0.0035(0.0052) &  0.0033(0.0052) &  0.0000(0.0052) &  0.0037(0.0052) \\
             &  120 &  0.0072(0.0048) &  0.0031(0.0047) &  0.0030(0.0047) &  0.0000(0.0047) &  0.0035(0.0047) \\
							\hline
\end{tabular}}
\label{tableres1}
\end{table}

\begin{table*}[!h]
\centering
\caption{The $CP_{95\%}$ from the estimates of $\mu$ and $\Omega$ considering different values of $n$  with $N=10,000,000$ simulated samples, using the estimation methods:  1 - MLE, 2- Jeffreys's rule, 3 -  Reference 1  prior, 4 - Reference  2 prior, and 5 - Tibshirani Prior.}
\begin{tabular}{c|c|c|c|c|c|c}
\hline
 $\boldsymbol{\theta}$  & n &   MLE & Jeffreys & Ref $W$ & Ref $H$ & Tibshirani \\
\hline
\multirow{11}{*}{$H = -0.01$ } 
& 20  & 0.971 & 0.943 & 0.942 & 0.947 & 0.955 \\
& 30  & 0.973 & 0.941 & 0.941 & 0.944 & 0.948 \\
& 40  & 0.979 & 0.949 & 0.950 & 0.952 & 0.953 \\
& 50  & 0.977 & 0.944 & 0.942 & 0.944 & 0.947 \\
& 60  & 0.978 & 0.945 & 0.948 & 0.946 & 0.949 \\
& 70  & 0.980 & 0.948 & 0.949 & 0.949 & 0.950 \\
& 80  & 0.981 & 0.947 & 0.947 & 0.948 & 0.949 \\
& 90  & 0.980 & 0.946 & 0.948 & 0.947 & 0.948 \\
& 100 & 0.982 & 0.945 & 0.947 & 0.948 & 0.948 \\
& 110 & 0.985 & 0.948 & 0.947 & 0.950 & 0.950 \\
& 120 & 0.982 & 0.948 & 0.946 & 0.948 & 0.948 \\
\hline
\multirow{11}{*}{$H = 1.33$ } 
& 20  & 0.923 & 0.932 & 0.936 & 0.943 & 0.951 \\
& 30  & 0.937 & 0.940 & 0.942 & 0.947 & 0.953 \\
& 40  & 0.941 & 0.946 & 0.946 & 0.952 & 0.955 \\
& 50  & 0.936 & 0.942 & 0.943 & 0.945 & 0.950 \\
& 60  & 0.941 & 0.946 & 0.946 & 0.948 & 0.951 \\
& 70  & 0.937 & 0.945 & 0.942 & 0.946 & 0.946 \\
& 80  & 0.943 & 0.948 & 0.948 & 0.949 & 0.950 \\
& 90  & 0.946 & 0.952 & 0.952 & 0.954 & 0.955 \\
& 100 & 0.945 & 0.950 & 0.948 & 0.950 & 0.951 \\
& 110 & 0.940 & 0.945 & 0.945 & 0.946 & 0.948 \\
& 120 & 0.949 & 0.953 & 0.954 & 0.954 & 0.956 \\
\hline
\multirow{11}{*}{$H = 2.27$}
& 20 & 0.946 & 0.951 & 0.952 & 0.956 & 0.957 \\
& 30 & 0.938 & 0.945 & 0.946 & 0.951 & 0.953 \\
& 40 & 0.937 & 0.941 & 0.941 & 0.946 & 0.947 \\
& 50 & 0.939 & 0.945 & 0.946 & 0.948 & 0.949 \\
& 60 & 0.941 & 0.951 & 0.951 & 0.951 & 0.954 \\
& 70 & 0.943 & 0.954 & 0.956 & 0.955 & 0.957 \\
& 80 & 0.944 & 0.956 & 0.957 & 0.959 & 0.959 \\
& 90 & 0.945 & 0.954 & 0.956 & 0.956 & 0.956 \\
& 100 & 0.946 & 0.960 & 0.960 & 0.962 & 0.963 \\
& 110 & 0.943 & 0.956 & 0.956 & 0.956 & 0.957 \\
& 120 & 0.948 & 0.960 & 0.960 & 0.960 & 0.961 \\
\hline
\end{tabular}
\label{tableres2}
\end{table*}
 
Tables \ref{tableres1} and \ref{tableres2} present the Bias, MSEs, and $CP_{95\%}$ for the MLE and Bayesian estimators of the entropy measure $H$. In particular, the results revealed that:

\begin{enumerate}

    \item For all the parameter estimators, the Bias and MSE approach zero for large $n$, which implies asymptotic unbiasedness, i.e., the Bias approaches zero, and the MSE decreases as the number of samples increases.

    \item \textcolor{black}{
In the first scenario, we obtained results when the model reduces to the exponential distribution, and all the estimators behave similarly in terms of estimates. However, with the Bayesian approach, credibility intervals are more precise.}

    \item \textcolor{black}{The Bayesian results yielded better estimates compared to the classical estimators. In fact, \cite{firth1993bias} demonstrated that a bias correction in the MLE is equivalent to adopting Jeffreys' priors for the exponential family of distributions, including the gamma distribution. This should yield results approximately equal to those we obtain for the posterior estimates using the Jeffreys posterior. However, the confidence intervals differ because they are based on asymptotic results and tend to resemble those of the MLE. More importantly, other priors yielded better outcomes, indicating that the Bayesian approach in this instance provided superior results even when compared with corrective methods applied to the standard MLE.}
    
    \item The posterior means using reference priors 1 and 2 were superior to the posterior means using Jeffreys' prior and MLE. However, the posterior mean using reference prior 2 was consistently superior to the posterior mean using reference prior 1. This performance is validated through the coverage probability informed by the CI. Additionally, the coverage probability was high for all the estimators, and the credibility of the interval increases with sample size.
    
    \item For all estimators, the most significant drop in Bias and MSE was observed when the sample size increased from $20$ to $30$.
    
    \item Overall, the results show that the MLE performed worst, given its high bias and MSE. On the other hand, the posterior estimates using the matching prior provided Bayes estimates with smaller Bias and MSE; it was considered the most adequate prior for estimating $H$.
\end{enumerate}

According to the simulation results, the posterior distribution with the associated matching prior leads to the most precise results with the least bias and MSE. The cited prior outperforms other objective priors and ML estimates considered in this study and therefore should be chosen as the most appropriate prior for inference. Besides, the posterior estimates obtained from the matching prior have superior theoretical properties, such as invariance under one-to-one parameter transformations, consistent sampling, and consistency under marginalization. Therefore, we conclude that the posterior estimates derived from the matching prior distribution are more appropriate and superior for making inferences about the gamma distribution's population parameter. To conduct the Bayesian analysis with the proposed Bayes estimator, we have presented a function in R that can be used for this purpose, the details can be seen in Appendix A.

\section{Application}\label{application}

\subsection{Achaemenid dynasty}

The Achaemenid dynasty of the Achaemenid empire was the royal house of the ancient Persians who ruled over Persia kingdom. {\color{black}It is customary that} authority is {\color{black}transferred} to the descendant of the same bloodline {\color{black}after the death of the emperor}. The Persian Empire was built and expanded through military conquest to extend political control to a broader territory. 
The Persian dynasty suffered several reoccurring political conflicts, assassinations, and wars from internal and external entities, which shaped the political institutions over the years.

Conflicts potentially {\color{black}threaten} an emperor's {\color{black}tenure} duration and ascendancy {\color{black}of a descendant}. {\color{black}An emperor that established} a stable government is {\color{black}likely to have a longer tenure and be succeeded by the emperor's chosen descendant compared with emperors that were unable to establish a stable government}. {\color{black}Hence, tenure duration is a measure resourceful for quantifying the instability in the Achaemenid dynasty. We quantified the variation in the tenure duration of the Achaemenid dynasty using the gamma entropy.} 
The more frequently new emperors ascend the throne, the higher the {\color{black}uncertainty in the political situation, which results in a higher} entropy and more {\color{black}likely the government is} unstable. 

\begin{figure}[!h]
	\centering
	\includegraphics[scale=0.46]{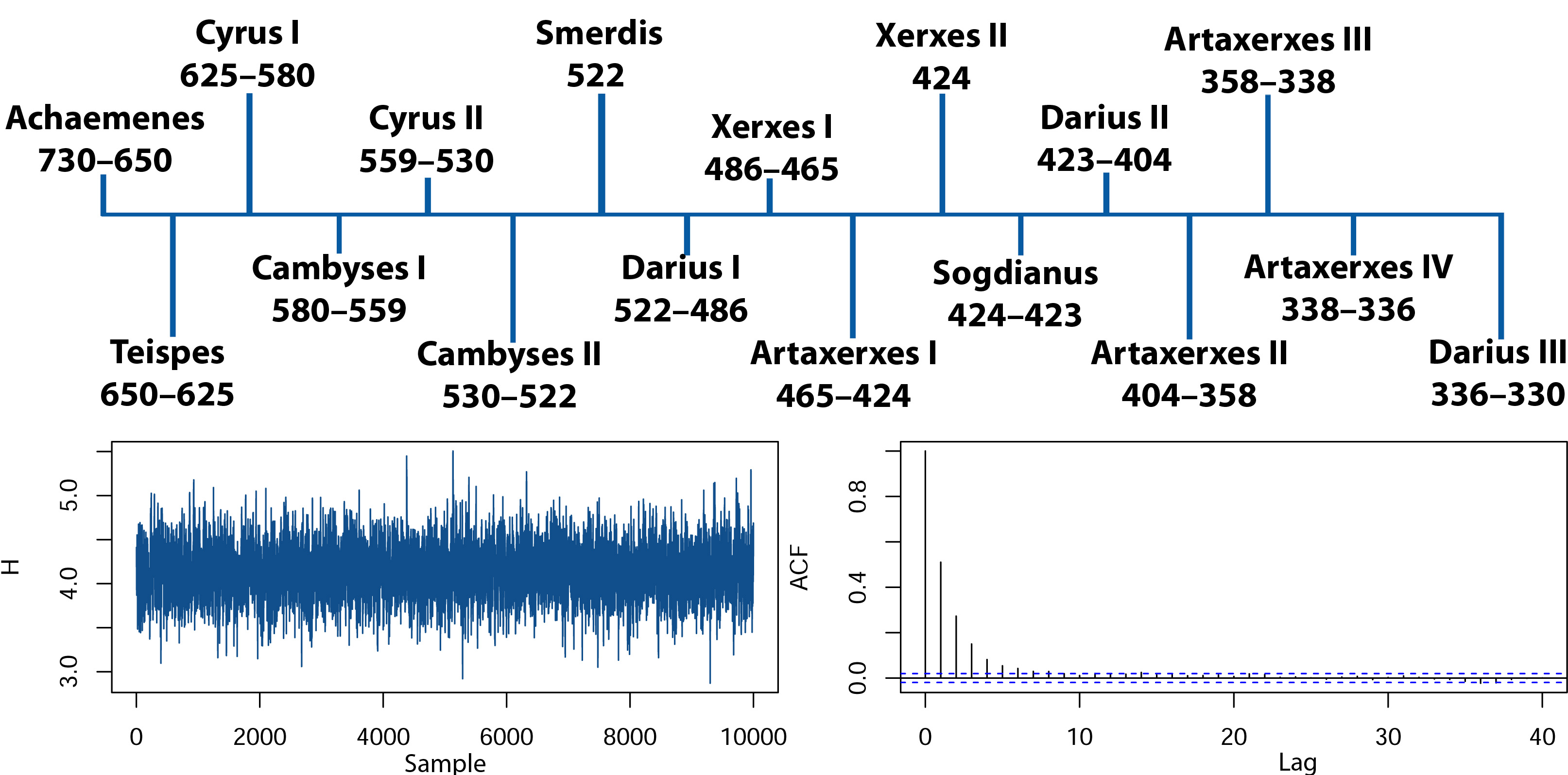}
	\caption{Timeline (BC) containing the data, time series plot of the posterior distribution of the entropy, and autocorrelation plot for the same distribution.}\label{fsimulation1}
\end{figure}

Figure \ref{fsimulation1} (top panel) shows the timeline of the Achaemenid dynasty. {\color{black}The Figure shows the duration of each emperor between emperors Achaemenes and Darius III. From the data, Emperor Achaemenes had the longest tenure of 80 years, and Emperors Smerdis and Xerxes II had the shortest tenures, which were less than a year.} The Kolmogorov-Smirnov (KS) test (statistic D = 0.21) was used to confirm that the data follow a gamma distribution. Figure \ref{fsimulation1} (down-left panel) shows the time series of the Bayesian estimate of the entropy $H$, and the autocorrelation plot (down-right panel). The time series and the autocorrelation plot indicate the chain's convergence, which was also confirmed by the Geweek test \citep{geweke1992evaluating}. For comparison {\color{black}purpose}, the same model was applied to the Roman Empire timeline data, which was {\color{black}previously} analyzed by  \cite{Ramos2020Power}.

Using the posterior distribution obtained from the matching prior, the Bayes estimate of the Achaemenid dynasty's entropy is $4.13$ with a $95\%$ credible interval of $(3.55;4.73)$. Moreover, with the same prior, the posterior estimate for the Roman Empire is $3.08$ with a $95\%$ credible interval of $(2.80;3.36)$. 
The {\color{black}results indicate that} the Achaemenid dynasty had a higher entropy, which implies that the Achaemenid dynasty's political institution was more volatile compared with the Roman Empire. That is, the time between the successive emperors is significantly different, shorter, and irregular for the Achaemenid dynasty, which signifies instability in their political institutions relative to the Roman Empire. These results support the historian's claim that the Achaemenid Empire set out for wars and consequently were exposed to internal and external {\color{black}conflicts}.  

\subsection{Harvest Sugarcane machine}

 Sugarcane farming is pertinent to Brazil's economic growth and has heavily contributed to its Gross Domestic Product (GDP). The production process involves an automated harvesting mechanism, and the interest of the sugarcane farmers is to sustain its harvesting mechanism for an extended period. Moreover, the production chain must be kept in stable conditions to avoid fluctuation in production {\color{black}process and prevent} wastage. We {\color{black}estimated the gamma entropy of the harvesting machine failure times} using the developed Tibshirani prior.  The {\color{black}higher the} entropy, {\color{black}the higher the uncertainty and} severe the irregularities in the production process. Otherwise, the production process is steady. 

The considered data was collected between January 2015 to August 2017, which corresponds to {\color{black}over two years of}
harvests and twenty-one failure times (in days) of the suspension of the sugarcane harvester machine:
11, 19, 36, 4, 8, 11, 39, 74, 168, 27, 116, 3, 34,   1,  46, 12, 2, 56, 14, 52, 14. The KS test was used (statistic D = 0.12) to confirm that the failure times follow a gamma distribution.

Figure \ref{fsimulation1} presents the time series of the Bayesian estimation of the average entropy (left panel) and the correlation lags (right panel). The convergence of the estimate was tested using the Geweek test.
%
\begin{figure}[!h]
	\centering
	\includegraphics[scale=0.42]{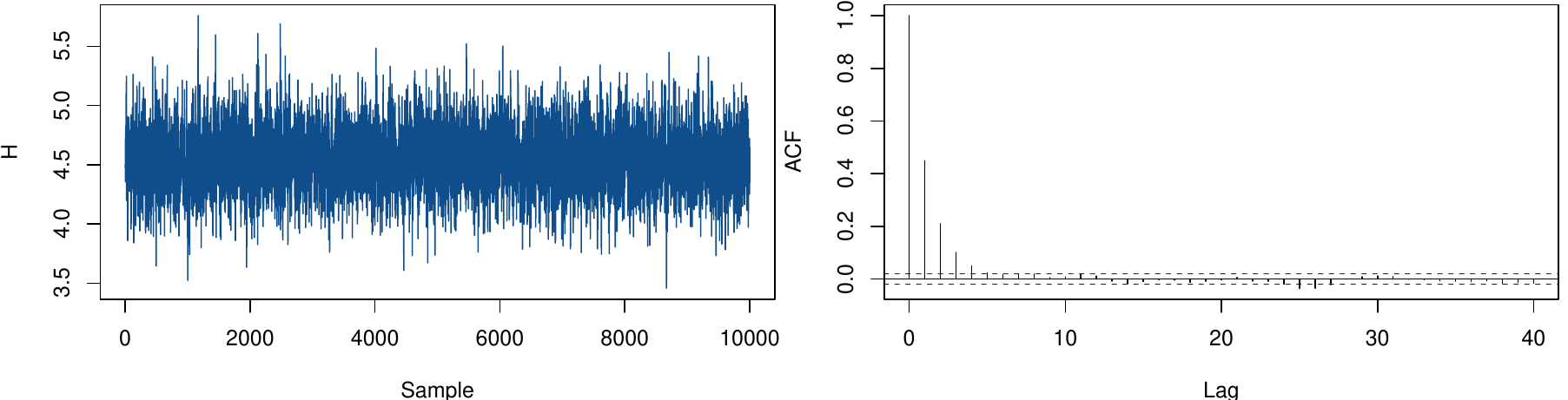}
	\caption{Time series plot of the posterior distribution of the entropy and autocorrelation plot for the same distribution.}\label{fsimulation5}
\end{figure} 
The Bayes estimate of the entropy for the failure times of the harvest sugarcane machine is $4.55$ with a $95\%$ credible interval of $(4.04; 5.09)$. The estimated entropy, {\color{black} in combination with the state of the harvesting matching, can be used for benchmarking future harvesting processes to determine its stability.} 
The harvesting machine must regularly pass a thorough maintenance check within the harvester's life circle to keep a steady production flow. 

\section{Final Remarks}\label{conclusions}

The concept of entropy originated in statistical thermodynamics and was later adapted for application in other fields. In information theory, Shannon entropy measures the uncertainty of a random process. In statistical inference, the parameters of Shannon entropy are determined using the maximum likelihood approach (MLE). However, this approach yields biased results for small samples, and the confidence intervals may not achieve the desired coverage probabilities if the asymptotic assumptions are not met. In this paper, we introduce a fully objective Bayesian analysis to obtain the posterior distribution of Shannon entropy, thereby addressing this limitation.

We considered objective priors, ensuring that the obtained posterior distributions are not overshadowed by prior information. The posterior distributions were derived assuming Jeffreys prior, reference priors, and matching priors, all invariant under one-to-one transformations. Since the obtained priors are improper, they could lead to improper posteriors, which is undesirable. We proved that the obtained posteriors are proper distributions, addressing this issue and enabling the conduct of Bayesian analysis. The posterior mean was considered a Bayes estimator, and given that they may not exist or be finite, we also proved that the posterior means are finite for any sample size. Hence, four posterior distributions were proposed for conducting inference. An intensive simulation study was conducted to select either a Bayesian estimator or the MLE. The posterior distribution using the matching prior yielded better results in terms of bias, mean square error, and coverage probabilities compared to other methods, while the MLE yielded the worst results.

We analyzed a particular case of the gamma distribution, which is a more flexible and general model than the exponential distribution, and has been used to describe many real phenomena. Although we considered a particular case of the gamma distribution, our approach is general and can be extended for any probability distribution function.

The proposed Bayes estimator was implemented in R language, with the code available in the appendix, to estimate the Shannon entropy measure. We applied this implementation to estimate the entropy related to the rule time of the Achaemenid dynasty, which yielded a higher value compared to the Roman Empire. This shows that changes in the throne were less probable in the Achaemenid dynasty than in the Roman Empire, indicating significant instability in their political institutions, which may have contributed to their fall. Furthermore, we analyzed the time until the failure of the suspension in a sugarcane harvesting machine, estimating its entropy using the Bayesian approach.

There are numerous possible extensions to the current work. Other distributions can be considered in the same context, and the Bayes estimator of Shannon entropy can be derived. Different types of entropy measures, such as Hartley, Rényi, and Tsallis entropy, can also be estimated under a Bayesian approach. We plan to explore this line of research in the future.

\section*{Data Availability}

The computer codes and data that support the findings of this study are openly available on GitHub at https://github.com/eosafu/GammaEntropy

\section*{Acknowledgements}

Eduardo Ramos acknowledges financial support from S\~ao Paulo State Research Foundation (FAPESP Proc. 2019/27636-9). Francisco Rodrigues acknowledges financial support from CNPq (grant number 309266/2019-0).  Francisco Louzada is supported by the Brazilian agencies CNPq (grant number 301976/2017-1) and FAPESP (grant number 2013/07375-0).

\section{Appendices}

Here, we provide the proof of the Theorems.

\subsection{Proof of Theorem \ref{theo33}}\label{ccheo33}

\begin{proof} Using the change of variables $\exp(-H)=u\Leftrightarrow du = - \exp(-H)dH$ and denoting $\delta_1(W) = \exp(W + \log(\Gamma(W))+(1-W)\psi(W))$ it follows that 
{\small
\begin{equation*}
\begin{aligned}
d_1(x)&\propto \int_0^\infty \int_{-\infty}^\infty \pi_1(H,W|\boldsymbol{x})\; dH dW \\ &  \propto \int_0^\infty \int_0^\infty\frac{\delta_1(W)^{nW}u^{nW-1}\sqrt{W\psi'(W)-1}}{\Gamma(W)^n}\left\{\prod_{i=1}^n{x_i^W}\right\}\exp\left\{-\delta_1(W)u\sum_{i=1}^n x_i\right\} du dW\\ &
=\int_0^\infty \frac{\delta_1(W)^{nW}\sqrt{W\psi'(W)-1}}{\Gamma(W)^n}\left\{\prod_{i=1}^n{x_i^W}\right\}\int_0^\infty u^{nW-1}\exp\left\{-\delta_1(W)\left(\sum_{i=1}^n x_i\right) u\right\} du dW\\ &
= \int_0^\infty \sqrt{W\psi'(W)-1}\frac{\left\{\prod_{i=1}^n{x_i^W}\right\}}{\left( \sum_{i=1}^n x_i\right)^{nW}} \frac{\Gamma(nW)}{\Gamma(W)^n} dW
=\int_0^1 g_1(W)dW + \int_1^\infty g_1(W) dW,
\end{aligned}
\end{equation*}}
where $g_1(W) =\sqrt{W\psi'(W)-1}\frac{\left\{\prod_{i=1}^n{x_i^W}\right\}}{\left( \sum_{i=1}^n x_i\right)^{nW}} \frac{\Gamma(nW)}{\Gamma(W)^n}>0$ for all $W\in (0,\infty)$. Now, according to \citep{ramos2017bayesian,ramos2018posterior}, we have $\frac{\Gamma(nW)}{\Gamma(W)^n}\underset{W\to 0^+}{\propto}W^{n-1}$ and  $\sqrt{W\psi'(W)-1}\underset{W\to 0^+}{\propto} W^{-1/2}$ and since 
\begin{equation*} \lim_{W\to 0^+} \frac{\left\{\prod_{i=1}^n{x_i^W}\right\}}{\left( \sum_{i=1}^n x_i\right)^{nW}} = 1\Rightarrow \frac{\left\{\prod_{i=1}^n{x_i^W}\right\}}{\left( \sum_{i=1}^n x_i\right)^{nW}}\underset{W\to 0^+}{\propto}1,
\end{equation*}
it follows by Proposition \ref{prop32} that
\begin{equation*} \int_0^1 g_1(W) dW \propto \int_0^1  W^{-1/2}\times 1\times W^{n-1}\; dW < \infty.
\end{equation*}
Moreover, due to \citep{ramos2017bayesian,ramos2018posterior} we have $\frac{\Gamma(nW)}{\Gamma(W)^n}\underset{W\to \infty}{\propto} n^{nW}W^{(n-1)/2}$ and $\sqrt{W\psi'(W)-1}\underset{W\to \infty}{\propto} W^{-1/2}$, and since $x_i$ are not all equal, due to the inequality of the arithmetic and geometric means we have $q=\log\left(\frac{\frac{1}{n} \sum_{i=1}^n x_i}{\sqrt[n]{\prod_{i=1}^n{x_i}}}\right)>0$ and thus it follows that
\begin{equation*}\frac{\left\{\prod_{i=1}^n{x_i^W}\right\}}{\left( \sum_{i=1}^n x_i\right)^{nW}}=\left(\frac{\frac{1}{n} \sum_{i=1}^n x_i}{\sqrt[n]{\prod_{i=1}^n{x_i}}}\right)^{-nW}n^{-nW}= \exp\left(-nqW\right)n^{-nW}.
\end{equation*}
Therefore, from Proposition \ref{prop32} it follows that
\begin{equation*}
\begin{aligned}
\int_1^\infty g_1(W) dW & \propto  \int_1^\infty W^{-1/2}\times \exp\left(-nqW\right)n^{-nW}\times n^{nW}W^{(n-1)/2} dW\\ & = \int_1^\infty W^{n/2-1}\exp\left(-nqW\right) \; dW =\frac{\Gamma(n/2)}{(nq)^{n/2}}<\infty,
\end{aligned}
\end{equation*}
which concludes the proof.
\end{proof}

\subsection{Proof of Theorem \ref{theo34}}\label{ctheo34}

\begin{proof}  Doing the change of variables $\exp(-H)=u\Leftrightarrow du = - \exp(-H)dH$ and denoting $\delta_1(W) = \exp(W + \log(\Gamma(W))+(1-W)\psi(W))$, it follows that 
{\footnotesize

\begin{equation*} 
\begin{aligned}
E_1[H|x]&\propto \int_0^\infty \int_{-\infty}^\infty H \pi_1({\color{blue}W,H}|\boldsymbol{x})\; dH dW \\ &  = \int_0^\infty \int_0^\infty -\log(u)\frac{\delta_1(W)^{nW}u^{nW-1}\sqrt{W\psi'(W)-1}}{\Gamma(W)^n}\left\{\prod_{i=1}^n{x_i^W}\right\}\exp\left\{-\delta_1(W)u\sum_{i=1}^n x_i\right\} du dW\\ &
=\int_0^\infty \frac{\delta_1(W)^{nW}\sqrt{W\psi'(W)-1}}{\Gamma(W)^n}\left\{\prod_{i=1}^n{x_i^W}\right\}\int_0^\infty \left(-\log(u)\right) u^{nW-1}\exp\left\{-\delta_1(W)\left(\sum_{i=1}^n x_i\right) u\right\} du dW.
\end{aligned}
\end{equation*}}
Moreover, from the identity $\psi(z)\Gamma(z)=\Gamma'(z)=\int_0^\infty \log(t)t^{z-1}e^{-t}dz$ one obtains that
\begin{equation*} \int_0^\infty \log(s)s^{z-1}e^{-as}ds = 1/a^z\int_0^\infty \log(t/a)t^{z-1}e^{-t}dt = 1/a^z \left(\psi(z)\Gamma(z)-\log(a)\Gamma(z)\right)
\end{equation*}
and thus, letting $\left|\cdot \right|$ denote the absolute value operator and letting $\delta_2(W) = \left| \psi(nW)\right|+\left|\log(\Gamma(W))\right|+(1+W)|\psi(W)|+W+\left|\log\left(\sum_{i=1}^n x_i\right)\right|$ for all $W>0$, and using the triangle inequality we have
{\small
\begin{equation*}
\begin{aligned}
\left|E_1[H|x]\right| & \propto  \left|  \int_0^\infty  \left(\psi(nW)-\log\left(\delta_1(W)\sum_{i=1}^n x_i\right)\right) \sqrt{W\psi'(W)-1}\frac{\left\{\prod_{i=1}^n{x_i^W}\right\}}{\left( \sum_{i=1}^n x_i\right)^{nW}} \frac{\Gamma(nW)}{\Gamma(W)^n} dW\right|\\
 &\leq   \int_0^\infty \left| \psi(nW)-\log\left(\delta_1(W)\sum_{i=1}^n  x_i\right)\right| \sqrt{W\psi'(W)-1}\frac{\left\{\prod_{i=1}^n{x_i^W}\right\}}{\left( \sum_{i=1}^n x_i\right)^{nW}} \frac{\Gamma(nW)}{\Gamma(W)^n} dW\\
& \leq \int_0^\infty \delta_2(W) \sqrt{W\psi'(W)-1}\frac{\left\{\prod_{i=1}^n{x_i^W}\right\}}{\left( \sum_{i=1}^n x_i\right)^{nW}} \frac{\Gamma(nW)}{\Gamma(W)^n} dW\\ &=\int_0^1 h_1(W) dW + \int_1^\infty h_1(W) dW,\\
\end{aligned}
\end{equation*}}
where $h_1(W)=\delta_2(W) \sqrt{W\psi'(W)-1}\frac{\left\{\prod_{i=1}^n{x_i^W}\right\}}{\left( \sum_{i=1}^n x_i\right)^{nW}} \frac{\Gamma(nW)}{\Gamma(W)^n}$ for all $W>0$.

We shall now prove that $\delta_2(W)\underset{W\to 0^+}{\propto} W^{-1}$ and $\delta_2(W)\underset{W\to \infty}{\propto} W\log(W)$. Indeed, notice that $\delta_2(W)\geq W>0$ for $W>0$. Moreover, since due to \cite{abramowitz} we have $\lim_{W\to 0^+}W\Gamma(W)=1$ and $\lim_{W\to 0^+}-W\psi(W)=1$ it follows that 
\begin{equation*}
\begin{aligned}
&\lim_{W\to 0^+}\frac{\left|\psi(nW)\right|}{W^{-1}}=\lim_{W\to 0^+}\frac{1}{n}\left|(nW)\psi(nW)\right|=\frac{1}{n}\\
&\lim_{W\to 0^+}\frac{ \left|\log\left(\Gamma(W)\right)\right|}{W^{-1}} = \lim_{W\to 0^+} \left|W\log(\Gamma(W))-W\log(W)\right|=\left|0\cdot \log(1)-0\right| = 0\\
& \lim_{W\to 0^+}\frac{ (1+W)\left|\psi(W)\right|}{W^{-1}}=\lim_{W\to 0^+}(1+W)\left|W\psi(W)\right|=1\mbox{ and }\\
&\lim_{W\to 0^+} \frac{W+\left|\log\left(\sum_{i=1}^n x_i\right)\right|}{W^{-1}} =\lim_{W\to 0^+} \left(W^2+W\left|\log\left(\sum_{i=1}^n x_i\right)\right|\right) =0
\end{aligned}
\end{equation*}
and thus
\begin{equation*} \lim_{W \to 0^+}\frac{\delta_2(W)}{W^{-1}} = \frac{1}{n}+1\Rightarrow \delta_2(W)\underset{W\to 0^+}{\propto} \frac{1}{W}.
\end{equation*}
On the other hand, since due to \cite{abramowitz} we have $\lim_{W\to \infty}\frac{\psi(W)}{\log(W)}=1$, it follows from the L'hopital rule that
\begin{equation*}
\lim_{W\to \infty} \frac{\log(\Gamma(W))}{W(\log(W)+1)}=\lim_{W\to \infty} \frac{(\log(\Gamma(W))'}{(W(\log(W)+1))'}= \lim_{W\to \infty} \frac{\psi(W)}{\log(W)}=1,
\end{equation*}
and therefore, considering $W\geq 1$ we have
\begin{equation*}
\begin{aligned}
&\lim_{W\to \infty}\frac{\left|\psi(W)\right|}{W(\log(W)+1)} = \lim_{W\to \infty}\frac{1}{W}\frac{1}{(1+\log(W)^{-1})}\left|\frac{\psi(W)}{\log(W)}\right| = 0,\\
&\lim_{W\to \infty} \frac{\left|\log(\Gamma(W))\right|}{W(\log(W)+1)}=
\lim_{W\to \infty}\left| \frac{\log(\Gamma(W))}{W(\log(W)+1)}\right|=1,\\
&\lim_{W\to \infty} \frac{(1+W)\left| \psi(W)\right|}{W(\log(W)+1)} =\lim_{W\to \infty} \left(1+W^{-1}\right)\frac{1}{\left(1+\log(W)^{-1}\right)}\left|\frac{\psi(W)}{\log(W)}\right|=1,\mbox{ and }\\
&\lim_{W\to \infty} \frac{W+\left|\log\left(\sum_{i=1}^n x_i\right)\right|}{W(\log(W)+1)} = \lim_{W\to \infty} \left(\frac{1}{\log(W)+1}+\frac{\left|\log\left(\sum_{i=1}^n x_i\right)\right|}{W(\log(W)+1)}\right)=0,
\end{aligned}
\end{equation*}
and thus
\begin{equation*} \lim_{W\to \infty}\frac{\delta_2(W)}{W(\log(W)+1)}=2\Rightarrow \delta_2(W)\underset{W\to \infty}{\propto} W\log(W).
\end{equation*}
Therefore, combining the obtained proportionality  $\delta_2(W)\underset{W\to 0^+}{\propto} W^{-1}$ with the proportionalities proved in Theorem \ref{theo33} and using Proposition \ref{prop32} we have
\begin{equation*} \int_0^1 h_1(W) dW \propto \int_0^1 W^{-1}\times W^{-1/2}\times 1\times W^{n-1}\; dW < \infty.
\end{equation*}
Finally, using the proportionality $\delta_2(W)\underset{W\to \infty}{\propto} W\log(W)$, letting $q>0$ be as in the proof of Theorem \ref{theo33} and using that $\log(W)+1\leq \exp(\log(W))=W$ for $W\geq 1$, it follows from the proportionalities proved during Theorem \ref{theo33} and from Proposition \ref{prop32} that
\begin{equation*}
\begin{aligned}
\int_1^\infty h_1(W) dW & \propto  \int_1^\infty W(\log(W)+1)\times  W^{-1/2}\times \exp\left(-nqW\right)n^{-nW}\times n^{nW}W^{(n-1)/2} dW\\ & \leq \int_1^\infty W^{(n/2+2)-1}\exp\left(-nqW\right) \; dW =\frac{\Gamma(n/2+2)}{(nq)^{n/2+2}} <\infty,
\end{aligned}
\end{equation*}
which concludes the proof.
\end{proof}

\subsection{Proof of Theorem \ref{theo35}}\label{ctheo35}

\begin{proof} Doing the change of variables $\exp(-H)=u\Leftrightarrow du = - \exp(-H)dH$, denoting $\delta_1(W) = \exp(W + \log(\Gamma(W))+(1-W)\psi(W))$ and proceeding analogously as in the proof of Theorem \ref{theo33} we have \begin{equation*}
\begin{aligned}
d_2(w)\propto &\int_0^\infty \int_{-\infty}^\infty \pi_2(W,H|\boldsymbol{x})\; dH dW \propto \int_0^1 g_2(W)dW + \int_1^\infty g_2(W) dW,
\end{aligned}
\end{equation*}
where $g_2(W) =\sqrt{\psi'(W)}\frac{\left\{\prod_{i=1}^n{x_i^W}\right\}}{\left( \sum_{i=1}^n x_i\right)^{nW}} \frac{\Gamma(nW)}{\Gamma(W)^n}>0$ for all $W\in (0,\infty)$. Now, according to \citep{ramos2017bayesian,ramos2018posterior}, we have $\frac{\Gamma(nW)}{\Gamma(W)^n}\underset{W\to 0^+}{\propto}W^{n-1}$ and  $\sqrt{\psi'(W)}\underset{W\to 0^+}{\propto} W^{-1}$, and since we proved in Theorem \ref{theo33} that $\frac{\left\{\prod_{i=1}^n{x_i^W}\right\}}{\left( \sum_{i=1}^n x_i\right)^{nW}}\underset{W\to 0^+}{\propto}1$,
it follows from Proposition \ref{prop32} that
\begin{equation*} \int_0^1 g_2(W) dW \propto \int_0^1  W^{-1}\times 1\times W^{n-1}\; dW < \infty.
\end{equation*}
Moreover, from \cite{abramowitz} we have $\sqrt{\psi'(W)}\underset{W\to \infty}{\propto} W^{-1/2}$, which combined with $\sqrt{W\psi'(W)-1}\underset{W\to \infty}{\propto} W^{-1/2} $ implies in $\sqrt{\psi'(W)}\underset{W\to \infty}{\propto} \sqrt{W\psi'(W)-1}$. Therefore it follows that $g_2(W)\underset{W\to \infty}{\propto} g_1(W)$. and by Proposition \ref{prop32} it follows that
\begin{equation*}
\begin{aligned}
\int_1^\infty g_2(W) dW \propto \int_1^\infty g_1(W) \; dW <\infty,
\end{aligned}
\end{equation*}
which concludes the proof.
\end{proof}

\subsection{Proof of Theorem \ref{theo36}}\label{ctheo36}

\begin{proof}  Proceeding analogously as in the proof of Theorem \ref{theo34} it follows that \begin{equation*} 
\begin{aligned}
\left|E_2[H|x]\right|&\propto \int_0^\infty \left|\int_{-\infty}^\infty H \pi_2(H,W|\boldsymbol{x})\; dH dW\right| \\ &  \leq \int_0^\infty \delta_2(W) \sqrt{\psi'(W)}\frac{\left\{\prod_{i=1}^n{x_i^W}\right\}}{\left( \sum_{i=1}^n x_i\right)^{nW}} \frac{\Gamma(nW)}{\Gamma(W)^n} dW\\ &=\int_0^1 h_2(W) dW + \int_1^\infty h_2(W) dW,
\end{aligned}
\end{equation*}
where $\delta_2(W)$ is the same as defined in the proof of Theorem \ref{theo34} and
\begin{equation*}
    \begin{aligned}
h_2(W) =\delta_2(W) \sqrt{\psi'(W)}\frac{\left\{\prod_{i=1}^n{x_i^W}\right\}}{\left( \sum_{i=1}^n x_i\right)^{nW}} \frac{\Gamma(nW)}{\Gamma(W)^n} \cdot
\end{aligned}
\end{equation*}
Since in the proof of Theorem \ref{theo34} we showed that $ \delta_2(W)\underset{W\to 0^+}{\propto} W^{-1}$, together with the proportionalities proved in Theorem \ref{theo33} and Proposition \ref{prop32} we have
\begin{equation*} \int_0^1 h_2(W) dW \propto \int_0^1 W^{-1}\times W^{-1}\times 1\times W^{n-1}\; dW < \infty.
\end{equation*}
Finally, from the proof of Theorem \ref{theo35} we know that
$\sqrt{\psi'(W)}\underset{W\to \infty}{\propto} \sqrt{W\psi'(W)-1}$, which implies directly that $h_2(W)\underset{W\to \infty}{\propto} h_1(W)$, and thus from Proposition \ref{prop32} it follows that
\begin{equation*}
\begin{aligned}
\int_1^\infty h_2(W) dW \propto \int_1^\infty h_1(W) \; dW <\infty,
\end{aligned}
\end{equation*}
which concludes the proof.
\end{proof}

\subsection{Proof of Theorem \ref{theo39}}\label{ctheo39}

\begin{proof}
 Doing the change of variables $\exp(-H)=u\Leftrightarrow du = - \exp(-H)dH$, denoting $\delta_1(W) = \exp(W + \log(\Gamma(W))+(1-W)\psi(W))$ and proceeding analogously as in the proof of Theorem \ref{theo33} we have
 \begin{equation*}
\begin{aligned}
d_4(w)\propto &\int_0^\infty \int_0^\infty \pi_{4}(W,H|\boldsymbol{x})\; dH dW \propto \int_0^1 g_{4}(W)dW + \int_1^\infty g_{4}(W) dW,
\end{aligned}
\end{equation*}
where $g_{4}(W) =\frac{(W\psi'(W)-1)}{\sqrt{W}}\frac{\left\{\prod_{i=1}^n{x_i^W}\right\}}{\left( \sum_{i=1}^n x_i\right)^{nW}} \frac{\Gamma(nW)}{\Gamma(W)^n}>0$ for all $W\in (0,\infty)$. Now, according to \citep{ramos2017bayesian,ramos2018posterior}, we have $\frac{\Gamma(nW)}{\Gamma(W)^n}\underset{W\to 0^+}{\propto}W^{n-1}$ and $\sqrt{W\psi'(W)-1}\underset{W\to 0^+}{\propto} W^{-1/2}$, which implies in particular that $\frac{(W\psi'(W)-1)}{\sqrt{W}}\underset{W\to 0^+}{\propto} W^{-3/2}$, and since we already proved in Theorem \ref{theo33} that $ \frac{\left\{\prod_{i=1}^n{x_i^W}\right\}}{\left( \sum_{i=1}^n x_i\right)^{nW}}\underset{W\to 0^+}{\propto}1$,
it follows by Proposition \ref{prop32} that
\begin{equation*} \int_0^1 g_{4}(W) dW \propto \int_0^1  W^{-3/2}\times 1\times W^{n-1}\; dW < \infty.
\end{equation*}
Moreover, due to \citep{ramos2017bayesian,ramos2018posterior} we have $\frac{\Gamma(nW)}{\Gamma(W)^n}\underset{W\to \infty}{\propto} n^{nW}W^{(n-1)/2}$ and $\sqrt{W\psi'(W)-1}\underset{W\to \infty}{\propto} W^{-1/2}$, which implies in particular that $\frac{W\psi'(W)-1}{\sqrt{W}}\underset{W\to \infty}{\propto} W^{-3/2}$, and since we already proved in Theorem \ref{theo33} that $\frac{\left\{\prod_{i=1}^n{x_i^W}\right\}}{\left( \sum_{i=1}^n x_i\right)^{nW}}=\exp\left(-nqW\right)n^{-nW}$, where $q=\log\left(\frac{\frac{1}{n} \sum_{i=1}^n x_i}{\sqrt[n]{\prod_{i=1}^n{x_i}}}\right)>0$, by Proposition \ref{prop32} it follows that
\begin{equation*}
\begin{aligned}
\int_1^\infty g_{4}(W) dW & \propto \int_1^\infty W^{-3/2}\times \exp\left(-nqW\right)n^{-nW}\times n^{nW}W^{(n-1)/2} dW\\
& = \int_1^\infty W^{(n/2-1)-1}\exp\left(-nqW\right) \; dW =\frac{\Gamma(n/2-1)}{(nq)^{n/2-1}}<\infty,
\end{aligned}
\end{equation*}
which concludes the proof.
\end{proof}

\subsection{Proof of Theorem \ref{theo310}}\label{ctheo310}

\begin{proof}  Proceeding analogously as in the proof of Theorem \ref{theo34} it follows that \begin{equation*} 
\begin{aligned}
\left|E_4[H|x]\right|&\propto \int_0^\infty \left|\int_{-\infty}^\infty H \pi_4(H,W|\boldsymbol{x})\; dH dW\right| \\ &  \leq \int_0^\infty \delta_2(W) (W\psi'(W)-1)\frac{\left\{\prod_{i=1}^n{x_i^W}\right\}}{\left( \sum_{i=1}^n x_i\right)^{nW}} \frac{\Gamma(nW)}{\Gamma(W)^n} dW \\ & =\int_0^1 g_4(W) dW + \int_1^\infty g_4(W) dW,
\end{aligned}
\end{equation*}
where $\delta_2(W)$ is given as in the proof of Theorem \ref{theo34} and
\begin{equation*}
    \begin{aligned}
    h_4(W)=\delta_2(W) (W\psi'(W)-1)\frac{\left\{\prod_{i=1}^n{x_i^W}\right\}}{\left( \sum_{i=1}^n x_i\right)^{nW}} \frac{\Gamma(nW)}{\Gamma(W)^n}.
\end{aligned}
\end{equation*}
Since in the proof of Theorem \ref{theo34} we showed that $ \delta_2(W)\underset{W\to 0^+}{\propto} W^{-1}$, together with the proportionalities proved in Theorem \ref{theo33} and Proposition \ref{prop32} we have
\begin{equation*} \int_0^1 h_4(W) dW \propto \int_0^1 W^{-1}\times W^{-3/2}\times 1\times W^{n-1}\; dW < \infty.
\end{equation*}
Moreover, letting $q>0$ as in the proof of Theorem \ref{theo33}, since we proved during the proof of Theorem \ref{theo34} that $\delta_2(W)\underset{W\to \infty}{\propto} W(\log(W)+1)$ and since $\log(W)+1\leq \exp(\log(W))=W$ for $W\geq 1$ it follows from Proposition \ref{prop32} that
\begin{equation*}
\begin{aligned}
\int_1^\infty h_4(W) dW & \propto  \int_1^\infty W(\log(W)+1)\times  W^{-3/2}\times \exp\left(-nqW\right)n^{-nW}\times n^{nW}W^{(n-1)/2} dW\\ & \leq \int_1^\infty W^{(n/2+1)-1}\exp\left(-nqW\right) \; dW =\frac{\Gamma(n/2+1)}{(nq)^{n/2+1}}<\infty,
\end{aligned}
\end{equation*}
which concludes the proof.
\end{proof}

\end{document}